\documentclass[12pt]{amsart}

\usepackage{mystyle}

\begin{document}

\title{Non-convex Hamilton-Jacobi equations with gradient constraints}

\author[H. A. Chang-Lara]{H\'ector A. Chang-Lara}
\address{Department of Mathematics, CIMAT, Guanajuato, Mexico}
\email{hector.chang@cimat.mx}

\author[E. A. Pimentel]{Edgard A. Pimentel}
\address{Department of Mathematics, Catholic University of Rio de Janeiro, Brazil
}
\email{pimentel@puc-rio.br}

\begin{abstract}
We study non-convex Hamilton-Jacobi equations in the presence of gradient constraints and produce new, optimal, regularity results for the solutions. A distinctive feature of those equations regards the existence of a \emph{lower bound} to the norm of the gradient; it competes with the elliptic operator governing the problem, affecting the regularity of the solutions. This class of models relates to various important questions and finds applications in several areas; of particular interest is the modeling of optimal dividends problems for multiple insurance companies in risk theory and singular stochastic control in reversible investment models. 
\end{abstract}

\subjclass{35B65; 35F21; 49N60; 49L25.}
\keywords{Nonconvex Hamilton-Jacobi equations; Lipschitz-continuity; optimal regularity.}

\maketitle


\section{Introduction}\label{sec:intro}

We study viscosity solutions to the non-convex Dirichlet problem
\begin{align}\label{main_bvp}
\begin{cases}
\min(-\D u-1,|Du|-1) = 0 \text{ in } \W\\
u = 0 \text{ on }, \p\W
\end{cases}
\end{align}
where $\W\subset \mathbb{R}^n$ is open and $\mathbb{R}^n\setminus\W$ is non-negligible for the Lebesgue measure. We establish interior Lipschitz-continuity for $u$ and prove that $|Du|$ is a continuous function.

The analysis of \eqref{main_bvp} is strongly motivated by a game-theoretic formulation we detail next. A token starting at $x\in \W$ moves until it exits the domain, according to choices made by two players in successive stages. Each turn takes place in a time interval with small length $dt>0$. The first player aims to maximize the time the particle spends in $\W$ whereas the second player wants to minimize this quantity. At the beginning of the time interval $[t,t+dt)$, the first player decides if the particle moves according to a Brownian motion (Brownian strategy) or with speed $1$ (eikonal strategy). For the latter case the second player chooses the direction $\theta\in\partial B_1$. 

The incentive for the first player to choose Brownian is in the possibility of getting the particle away from the boundary; this strategy is preferable close to a convex region of $\R^n \sm \W$. However, on average the Brownian motion displaces the token by a distance $dx \sim \sqrt{dt} \gg dt$, which is larger than the displacement obtained from constant speed. From the perspective of the second player, constant speed might be preferable, for it allows to control the direction of the displacement; meanwhile there is also an advantage from the Brownian strategy, as it yields a faster movement (with some chance) towards the boundary, see Figure \ref{fig1}.

\begin{center}
\begin{figure}
\includegraphics[scale=.7]{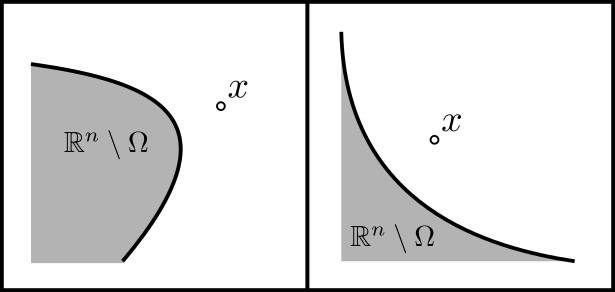}
\label{fig1}
\caption{In the scenario on the left, the first player may prefer to choose Brownian and get the particle away from the boundary with a good probability. In the scenario on the right it might be better to use eikonal to approach the boundary at a slower speed.}
\end{figure}
\end{center}

The classical optimal control theory approach suggests that in order to find the optimal strategies one needs to solve first a particular partial differential equation. Let $u(x)$ be the (optimal) value function of the game just described. It satisfies the following dynamic programming principle (DPP)
\[
u(x) = \max\1 \fint_{\p B_{\sqrt{2ndt}}}u(x+y)dy, \inf_{\theta\in \p B_1} u(x+\theta dt)\2 + dt.
\]
In the limit, as $dt\to 0^+$, this leads to the Hamilton-Jacobi-Bellman (HJB) in \eqref{main_bvp}.  For ease of notation, the diffusion in force in the case of the Brownian strategy is of the form
\[
	dx_t\,=\,\sqrt{2}\text{Id}\,dW_t,
\]
where $\text{Id}$ stands for the identity matrix in dimension $n$ and $W_t$ is a standard Brownian motion, adapted to a given stochastic basis. As a consequence, the associated equation is driven by the Laplacian, without the factor one-half. 

The importance of \eqref{main_bvp} is two-fold. First, it gives rise to a free boundary problem, as it splits $\W$ into two regions related to the optimal strategies for each player. Over $\{|Du|>1\}$ the first player chooses the Brownian strategy (we will refer to it as the Brownian region); over the region $\{|Du|=1\}$ (the eikonal region from now on) the first player chooses the eikonal strategy, and the second one moves in the direction of $-Du$. 

In addition to its connection with free boundary problems, \eqref{main_bvp} can also be classified as a Hamilton-Jacobi equation with gradient constraints. In fact, within the entire domain the gradient norm is bounded from below by $1$. We emphasize that, a priori, those bounds are understood in the viscosity sense. 

Hamilton-Jacobi equations in the presence of gradients constraints were first studied by Evans in \cite{MR529814}. In that paper, the author examines an equation of the form
\begin{equation}\label{eq_evans}
	\max\left\lbrace Lu-f,|Du|-g \right\rbrace\,=\,0 \text{ in } \W,
\end{equation}
where $L$ is an elliptic operator with twice-differentiable coefficients, and the functions $f, g\in C^2(\overline{\W})$ are given. Under those conditions, the author establishes existence of the solutions. Under the assumption that $L$ has constant coefficients, interior $C^{1,1}$-regularity for the solutions to \eqref{eq_evans} is available. In \cite{MR607553}, the author removes the assumption of constant coefficients and produces a $C^{1,1}$-interior regularity theory. A more general class of gradient constraints is considered in \cite{MR693645}; in that paper, the authors prove that solutions to \eqref{eq_evans} are $C^{1,1}$-regular. In addition, they further develop the uniqueness theory associated with the problem.

Further variants of the model in \eqref{eq_evans} have also been analyzed in the literature. We mention \cite{Yamada1988}, where the gradient constraint is paired with a finite family of elliptic operators $L_i$, with $i=1,\ldots,k$, and the maximum is taken with respect to the entire collection; see also \cite{Hynd-Mawi2016}, where the second order term is replaced with a fully nonlinear elliptic operator. We also refer the reader to \cite{MR3563780} and the references therein.

A further instance where gradient-constrained HJ equations appear is in the realm of singular optimal control problems. In this setting, as regards regularity theory, we refer the reader to \cite{MR1108426,MR1001925,MR1104105}. In those papers, the authors work under general conditions on the data of problem and prove that solutions are of class $C^2$, locally. Moreover, they examine the regularity of the associated free boundary. Once the regularity of the free boundary is available the authors are in position to construct the optimal control processes as reflected Brownian motions. For the analysis of the singular optimal control problem, we also mention \cite{Chow-Menaldi-Robin1985,Karatzas-Shreve1984,Karatzas-Shreve1985,Karatzas-Shreve1986}, just to name a few.


What makes a sharp distinction in our model is the type of restriction imposed on the gradient. In fact, \eqref{main_bvp} is non-convex with respect to the gradient. An immediate consequence relates to the gradient-constraint. While \cite{MR529814, MR3563780} imposes an \textit{upper} bound on $|Du|$, our problem imposes a \textit{lower} bound on it. It is not surprising that solutions to the type of problems in \cite{MR529814, MR3563780} are more regular than Lipschitz-continuous, whereas in our case Lipschitz regularity is indeed optimal.

The description of our problem can also be phrased by saying that an elliptic equation holds in the region \emph{where the gradient is large}. This broad class of degenerate elliptic equations in non-divergence form was studied by Imbert and Silvestre in \cite{MR3500837}. The main result in that work establishes that solutions to this class of problems are locally H\"older continuous, which follows from an Aleksandroff-Bakelman-Pucci type of estimate and a corresponding Harnack inequality. The strategy in that paper is to slide cusps from below, until they touch the graph of the solutions; it would enforce the equation to hold on contact points and produce a measure estimate. See \cite{MR3295593} for a generalization in the presence of merely bounded ingredients. When compared with our specific problem, our findings represent an improvement in the regularity -- all the way to Lipschitz-continuity -- and recover even stronger results for the norm of the gradient.

Further examples of operators with degenerate behavior depending on the gradient are the non-variational $p$-Laplacians. Even though there is a long list of results concerning these operators, we bring forward the work of Peres and Sheffield in \cite{MR2451291} as it is also motivated from a game-theoretical interpretation. Similar to the present work, the equation in \cite{MR2451291} defines the value function of a two-players game, which could be described as a stochastic tug-of-war with noise. The particular cases $p=1$ and $p=\8$ are perhaps the most interesting ones and were noticed before \cite{MR2451291}: $p=1$ relates with motion by mean curvature \cite{MR526057, MR2200259} and $p=\8$ stands for tug-of-war without noise \cite{MR2309980}. We would like to mention that the proof of Theorem \ref{thm:main} is inspired by the compactness approach of Wang for the $p$-Laplace equation in \cite{MR1264526}.

The original motivation for the study of \eqref{main_bvp} comes from an optimal dividends problems for multiple insurance companies in risk theory and mathematical finance. This type of models goes back to the work of Lundberg, Cr\'amer and de Finetti, developed in the first half of the 20th century, \cite{lundberg1909theorie, cramer, de1957impostazione}. We recommend the book by Azcue and Muler \cite{MR3287199} for a complete account of the theory, mainly developed by the probability community up to 2014.

Loosely adapting the model by Azcue, Muler, and Palmowski in \cite{MR3982209}, consider $n$ insurance companies, with accumulated surpluses at time $t$ given by a stochastic process
\[
X_t \in \W = \R^n_+ := \{x_i>0 \text{ for all } i\} 
\]
with initial condition $X_0 = x\in\W$. For instance,
\[
dX_t = b dt - dS_t
\]
where the drift $b \in \R^n_+$ is the constant income and $S_t = (\ldots, \sum_{i=1}^{N^{(j)}(t)} U^{(j)}_i , \ldots)$ describes the accumulated claims,  each component consisting of independent compounded Poisson processes with $U^{(j)}_i >0$.

Each branch uses its surplus to pay dividends, which is the control of the problem. Let $L(t) \in \overline{\R^n_+}$ be the accumulated dividends paid up to time $t$ such that $L(0)=0$ and each coordinate of $L$ is non-decreasing. Ruin occurs at the first time $\t$ at which $X-L$ leaves $\W = \R^n_+$. The goal is then to maximize the expected dividends paid by the branches until ruin, defining in this way the value
\[
u(x) = \sup_L \mathbb E_x \int_0^\t \sum_{i=1}^n dL_i(t).
\]
Other formulations may also include a discount factor in the previous formula.

Over a short interval of time, such that the process does not leave $\W$, we get the DPP
\[
u(x) = \sup_L\1\E_x u(X(dt)-L(dt)) + \sum_{i=1}^n L_i(dt)\2.
\]
Denote the infinitesimal generator of $X$ with $\mathcal{I}$, which is of integro-differential type in most applications. Then the HJB equation resulting from the DPP has the form
\[
\sup_{\ell \in \overline{\R^n_+}} \1 Iu - \sum_{i=1}^n \ell_i(\p_i u-1)\2 =0.
\]
The former equation entails natural constraints on the gradient of the solutions. Note that by taking $\ell=0$ we get $Iu\leq 0$; also, if $\p_i u-1<0$ for some coordinate, the supremum would be infinite. Hence, $-Iu, \p_1 u-1, \ldots, \p_n u-1\geq 0$ must be satisfied. Now it is easy to conclude that the previous equation builds upon the former discussion to yield
\[
\min(-Iu, H(Du)-1) = 0, \qquad H(Du) = \min(\p_1 u,\ldots, \p_n u)
\]
Notice that similar to our model, the main feature of this equation is that it enforces a lower bound on the gradient of the solution.

When analyzed in terms of the strategies that we develop on this paper, the main difficulty on this particular gradient-constraint problem seems to be the fact that $\{H\leq 1\}$ is an unbounded region. Otherwise we could say at least that there is an elliptic integro-differential equation which holds in the region where the gradient is large. We believe there are also technical issues to be addressed due to the non-local nature of the operator. However, these fall into the scope of the recent developments on the regularity theory of integro-differential elliptic equations; see for instance the survey by Ros-Oton in \cite{MR3447732}. Related formulations in singular optimal control, arising in the modeling of reversible investment policies and in the problem of purchasing electricity, are connected with Hamilton-Jacobi equations as in \eqref{main_bvp}; see \cite{Merhi-Zervos2007,Federico-Pham2014,DeAngelis-Ferrari-Moriarty2015}. 

In this work we analyze the existence, uniqueness and regularity of solutions to \eqref{main_bvp}. Existence and uniqueness follow by the comparison principle and Perron's method; this is reported in Section \ref{sec:pre}. In Section \ref{sec:reg} we establish an interior Lipschitz estimate for the solutions; see Theorem \ref{thm:lip}. We stress this is the optimal regularity for \eqref{main_bvp}, which follows from a series of examples put forward further in the paper. 

Our main result read as follows:

\begin{theorem}[Continuity of $|Du|$]\label{thm:main}
Let $\W \ss\R^n$ open, $r\geq 0$, and $u\in  C(\W)$ a viscosity solution to
\[
\min(-\D u-r,|Du|-1) = 0 \text{ in } \W.
\]
Then $|Du| \in C(\W)$.
\end{theorem}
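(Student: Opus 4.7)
\emph{Proof plan.} My approach combines a Bernstein-type subharmonicity identity in the Brownian region with a compactness and blow-up argument across the free boundary, inspired by Wang's method for the $p$-Laplacian. Since $u$ is locally Lipschitz by Theorem~\ref{thm:lip}, its gradient exists almost everywhere. I set $V\subset\Omega$ to be the maximal open subset on which the classical equation $-\Delta u = r$ holds; linear elliptic regularity with constant right-hand side gives $u\in C^\infty(V)$, so $|Du|$ is already continuous on $V$. Each partial $\partial_i u$ is then harmonic on $V$, and the Bernstein identity
\[
\Delta|Du|^2 \;=\; 2|D^2 u|^2 \;\geq\; 0
\]
makes $|Du|^2$ subharmonic on $V$, with the pointwise lower bound $|Du|^2\geq 1$. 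It therefore suffices to prove that $|Du(y)|\to 1$ as $V\ni y\to x_0$ for every $x_0\in\partial V\cap\Omega$.

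Arguing by contradiction, I suppose there are $x_0\in\partial V\cap\Omega$, $\delta>0$ and $y_n\in V$ with $y_n\to x_0$ and $|Du(y_n)|^2\geq 1+\delta$. Setting $d_n:=\mathrm{dist}(y_n,\Omega\setminus V)\leq|y_n-x_0|\to 0$, I rescale
\[
u_n(y) \;:=\; \frac{u(y_n+d_n y)-u(y_n)}{d_n},
\]
so that $B_1(0)\subset V_n$, the rescaled equation reads $-\Delta u_n = d_n r$ on $B_1$, and the rescaled free boundary $\partial V_n$ touches $\partial B_1$ at some $\eta_n$. The uniform Lipschitz bound combined with interior Schauder estimates for $-\Delta u_n = d_n r$ yields, along a subsequence, $u_n\to u_\infty$ locally uniformly on $\mathbb{R}^n$ and in $C^{2,\alpha}_{\mathrm{loc}}(B_1)$, with $\eta_n\to\eta_\infty\in\partial B_1$. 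The limit $u_\infty$ is then a globally Lipschitz viscosity solution of the homogeneous equation $\min(-\Delta u_\infty,|Du_\infty|-1)=0$ on $\mathbb{R}^n$, harmonic on $B_1$, satisfying $|Du_\infty(0)|^2\geq 1+\delta$.

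The desired contradiction should follow from a Liouville-type rigidity: if I can establish that $|Du_\infty(y)|^2\to 1$ as $B_1\ni y\to\eta_\infty$, then the strong maximum principle applied to the subharmonic function $|Du_\infty|^2$ on $B_1$ (interior value $\geq 1+\delta$ at $0$, boundary limit $1$ at $\eta_\infty$) forces $|Du_\infty|^2$ to be constant on $B_1$; hence $u_\infty$ would be affine on $B_1$ with slope of norm $\sqrt{1+\delta}>1$, directly contradicting the very boundary limit at $\eta_\infty$.

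\emph{Main obstacle.} The hard part is extracting the boundary limit $|Du_\infty(y)|^2\to 1$ at $\eta_\infty$: the $C^2$ estimates on $u_n$ degenerate near $\partial B_1$, and $\partial V_n$ does not converge to $\partial V_\infty$ in any naive Hausdorff sense, so one cannot just pass to the limit pointwise. The approach I would pursue is to leverage the viscosity supersolution property of $u_n$ at each free-boundary point $\eta_n\in\partial V_n$ --- where the classical equation fails in every neighborhood, so that any lower test function with Laplacian strictly below $-d_n r$ must have gradient norm at most $1$ --- and then carry this constraint to the limit through a Hopf-type boundary argument tailored to the non-convex gradient restriction.
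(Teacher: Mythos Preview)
Your approach is genuinely different from the paper's, but it has a real gap at exactly the point you flag as the ``main obstacle,'' and an additional logical error in the maximum-principle step.

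\textbf{The maximum-principle step is wrong as stated.} You say that if $|Du_\infty|^2$ is subharmonic on $B_1$, takes value $\geq 1+\delta$ at the origin, and has boundary limit $1$ at the single point $\eta_\infty$, then the strong maximum principle forces it to be constant. That is not what the strong maximum principle says: a subharmonic function can perfectly well have a small boundary value at one point and a larger interior value elsewhere without being constant (the maximum could be attained at another boundary point). You would need $|Du_\infty|^2\leq 1$ on all of $\partial B_1$, not just at $\eta_\infty$.

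\textbf{The deeper problem: free-boundary information does not survive the blow-up.} Even setting aside the previous issue, the contradiction you are aiming for cannot be extracted from the limiting equation alone. Nothing prevents $u_\infty$ from being an affine function $L$ with $|DL|^2=1+\delta$: such an $L$ is harmonic, has $|DL|-1>0$ everywhere, and so satisfies $\min(-\Delta L,|DL|-1)=0$ on all of $\R^n$ with \emph{no} free boundary. The only thing tying $\eta_\infty$ to the eikonal regime is that each $\eta_n$ lay on $\partial V_n$ before the limit; but convergence of free boundaries is precisely what you do not control, and your proposed ``Hopf-type argument at the viscosity level'' is a wish, not a mechanism. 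A further preliminary gap: your reduction ``it suffices to show $|Du|\to 1$ along $V$'' silently assumes $|Du|=1$ a.e.\ on $\Omega\setminus V$, which you have not justified.

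\textbf{How the paper proceeds instead.} The paper avoids the blow-up/Liouville route entirely. It works with the $C^2$ viscosity approximations $u_\e$ solving $-\e\Delta u_\e=\max(\e r,1-|Du_\e|)$, and proves, for every $\lambda>0$ and every direction $e$, a uniform-in-$\e$ H\"older estimate for $(\partial_e u_\e-(1+\lambda))_+$. The key is a dichotomy at each scale: either $\{\partial_e u_\e\geq 1+\lambda\}$ fills most of the ball, in which case a compactness argument gives that $u_\e$ is $\delta$-flat around a linear function with slope in $[1+\lambda,1+M]$; or it does not, in which case the subharmonicity of $(\partial_e u_\e-1)_+$ yields a fixed diminution of its oscillation. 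Iterating lands you, after finitely many steps, in the flat regime, where Savin's small-perturbation theorem finishes the $C^{1,\alpha}$ estimate. Passing $\e\to 0$ then gives continuity of $(|Du|-(1+\lambda))_+$ for each $\lambda>0$, hence of $|Du|$. The approximation is essential precisely because the subharmonicity of $(\partial_e u-1)_+$ is only available at the $C^2$ level.
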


We notice the equation in Theorem \ref{thm:main} is equivalent to \eqref{main_bvp}, due to a scaling argument (see Section \ref{sec:pre}). Finally, we notice the continuity of the gradient's norm enables us to prescribe, and make sense of, free boundary conditions of the form $|Du|=1$.

After analyzing the regularity of the solution, the next step would be to describe the interface, or free boundary, separating the two regimes. We believe it would be interesting to establish the semi-concavity of the solution, as it would unlock the rectifiability of the free boundary, as it has been done for the singular set of the eikonal equation by  Mantegazza and Mennucci in \cite{MR1941909}. We do not pursue this endeavor in the present paper.

The remainder of this paper is organized as follows. In Section \ref{sec:pre} we present some examples and detail results of general interest to the theory of viscosity solutions, such as stability, comparison principles, existence of solutions and global regularity. Lipschitz regularity is the subject of Section \ref{subsec_lipschitz}. We put forward the proof of Theorem \ref{thm:main} in Section \ref{subsec_mainresult}.

\bigskip

\noindent{\bf Acknowledgments:} The authors are grateful to Ovidiu Savin for his comments and suggestions on the material in this paper. HC acknowledges support from CONACyT-MEXICO Grant A1-S-48577. EP is partially supported by CNPq-Brazil (Grants \#433623/2018-7 and \#307500/2017-9), FAPERJ (Grant \#E.200.021-2018) and Instituto Serrapilheira (Grant \# 1811-25904). This study was financed in part by the Coordena\c{c}\~ao de Aperfei\c{c}oamento de Pessoal de N\'ivel Superior - Brazil (CAPES) - Finance Code 001.

\section{Preliminaries}\label{sec:pre}

In this section we examine further aspects of the problem in \eqref{main_bvp}. We proceed by discussing some examples and  present a few preliminary notions. Finally, we put forward results of general interest to the theory of viscosity solutions to \eqref{main_bvp}, including stability, comparison principles, existence and global regularity.

\subsection{Examples}\label{subsec_examples} Before proceeding we list some examples. Those account for distinctive aspects of the solutions to \eqref{main_bvp} and unveil important characteristics of the problem. The discussion is rather informal at this stage, the rigorous validation of these solutions is a consequence of our main theorems in the following sections.

\bigskip

\noindent\textbf{Example 1 (Purely eikonal regime).} For $n\geq 2$, let $\W = B_R \ss\R^n$, with $R \in (0,1/(n-1)]$. Then $u(x) = R-|x|$. An optimal strategy for the first player is to always choose the eikonal regime. In general, if the (inward) mean curvature of $\p \W$ is at least $1$ at every point, then the optimal strategy for the first player is always to choose the eikonal regime and $u$ is simply the distance function to $\R^n \sm \W$.

\bigskip

\noindent\textbf{Example 2 (Piecewise $C^1$-regular solutions).} For $n\geq 2$, if $\W = B_R \ss\R^n$ with $R > 1/(n-1)$ then
\[
u(x) = \begin{cases}
A+B\Phi(|x|)-|x|^2/(2n)& \text{ if } |x| \in (1/(n-1),R]\\
C-|x|& \text{ if } |x| \leq 1/(n-1),
\end{cases}
\]
where $\Phi$ is the fundamental solution, and the constants are related by the following conditions
\begin{align*}
&A+B\Phi(R) - R^2/(2n) = 0\\
&A+B\Phi(1/(n-1)) - 1/[2n(n-1)^2] = C - 1/(n-1)\\
&B\Phi'(1/(n-1))-1/[(n-1)n]=-1;
\end{align*}
the role of the former conditions is to ensure the solutions match the zero boundary value and glue pieces in a $C^1$ fashion.

\bigskip

\noindent\textbf{Example 3 (Approximating optimal strategies).} For $n=1$ and $\W = B_R= (-R,R)$ we have
\[
u(x) = (R+R^2/2) - (|x| + x^2/2).
\]
This example illustrates a singular type of behavior as it shows that there is not an exact strategy for the first player, but rather a sequence of them, approaching the optimal one. The idea is that for $\e>0$ small, the first player chooses eikonal only in the interval $(-\e,\e)$.

\bigskip

\noindent\textbf{Example 4 (Continuity of $|Du|$).} For $n\geq 2$, and $R>r>0$, set $\W = B_R\sm B_r$. Once again the solution is radial and there are two or three regimes, depending on how the radii $R$ and $r$ do compare with respect to $1/(n-1)$. If $R\in (0,1/(n-1)]$ we get that 
\[
u(x) = \begin{cases}
R-|x|& \text{ if } |x| \in (\r,R]\\
A+B\Phi(|x|)-|x|^2/(2n)& \text{ if } |x| \in [r,\r],
\end{cases}
\]
where $A$, $B$, and $\r \in (r,R)$ can be computed from
\begin{align*}
&A+B\Phi(r) - r^2/(2n) = 0\\
&A+B\Phi(\r) - \r^2/(2n) = R - \r\\
&B\Phi'(\r)=1.
\end{align*}
In this case, the pieces do not match in a $C^1$ fashion; meanwhile, we still conclude that $|Du|$ is a continuous function (in line with the main result in this paper).

If $R>1/(n-1)>r$ then $u$ solves the eikonal equation in $B_{1/(n-1)}\sm B_{\r}$, for some $\r\in(r,1/(n-1))$, and the Poisson equation in the remaining two rings. Finally if $r\geq 1/(n-1)$ we find two Brownian regimes. In these last two remaining cases, the parameters should be fixed in order to have continuous solutions with the free boundary condition $|Du|=1$ along the interfaces, see Figure \ref{fig2}.

\bigskip

\noindent\textbf{Example 5 (Infinite cost).} For $\W=\R^n\sm B_r$ the game-theoretic interpretation, or just the previous computation by letting $R\to\8$, suggests that the solution should be $u=+\8$. This is expected to happen in the case of a purely Brownian strategy, where the particle eventually hits $\overline{B_r}$, but in average this just takes an infinite amount of time.

\begin{center}
\begin{figure}
\includegraphics[scale=.7]{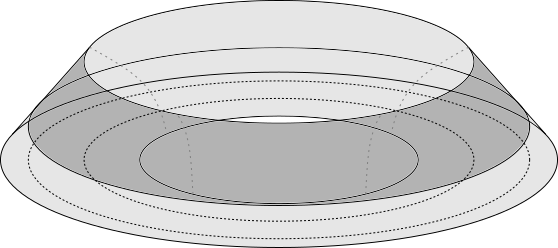}
\caption{Graph of the solution for $\W = B_R\sm B_r$ with $R>1/(n-1)>r$ exhibiting three regions, the middle (darker) one being eikonal.}
\label{fig2}
\end{figure}
\end{center}

Among the insights provided by the previous examples, we highlight the evidences that solutions to \eqref{main_bvp} should be no more regular than Lipschitz-continuous. We also anticipate that $|Du|$ is more than a merely bounded function. Those facts are in line with the results established in the present paper, Theorems \ref{thm:main} and \ref{thm:lip}. They also suggest the importance of the domain's geometry, as an ingredient affecting the existence of approximating optimal strategies and the possibility of infinite value functions.

\subsection{Scaling}

An important aspect of our analysis concerns the scaling properties of \eqref{main_bvp}. We start by noticing that both factors inside the minimum operator in \eqref{main_bvp} have different scaling regimes. Owing to the previous examples and the optimal regularity of solutions, we consider a Lipschitz-type of scaling. That is, for a solution $u$ of 
\[
	\min(-\D u-1,|Du|-1)=0\hspace{.4in}\mbox{in}\hspace{.1in} \W
\]
and $r>0$, we define
\[
v(x) = r^{-1}u(rx). 
\]
It is clear that 
\[
\min(-\D v-r,|Dv|-1)=0 \hspace{.4in}\text{ in }\hspace{.1in} r^{-1}\W
\]

For a game-theoretical interpretation consider now that the value of the game is computed with different weights in the two regimes. If the time in the Brownian regime is given by $r>0$ units of the eikonal time, then the corresponding equation becomes 
\[
\min(-\D u-r,|Du|-1)=0.
\]
Equivalently, one could keep counting time with equal weights but take the Brownian motion to be $1/r$ times faster. As a limit case when $r\to0$, we can even recover a scenario where time over the Brownian region does not count in the final value, giving us the equation
\[
\min(-\D u,|Du|-1)=0.
\]
Here the only incentive that the first player has to play the Brownian strategy is to keep the particle away from $\R^n\sm \W$. Let us compare the examples in the previous section for this problem:

\noindent\textbf{Examples 1, 2, and 3.} If $\W= B_R$, or in the more general case where the mean curvature of $\p\W$ is non-negative, we get that $u$ is the distance function to $\R^n\sm\W$. 

\bigskip

\noindent\textbf{Example 4.} For $n\geq 2$ and $\W = B_R\sm B_r$, there exists $\r \in (r,R)$ defining the Brownian region $B_\r\sm B_r$ whereas the remaining portion of $\W$ is the eikonal regime. The dimensional restriction entailed by the comparison of $R$ with $1/(n-1)$ no longer applies.

The last example might sound conflicting with the fact that the value function is positive on the Brownian region $B_\r\sm B_r$. In fact, suppose the particle starts at $x\in B_\r\sm B_r$; in this case, the second player simply allows the particle to move until it escapes $\W$ through $\p B_r$ with positive probability. Conversely, if the particles tries to escape through $\p B_\r$, then the second player pushes it back to $B_\r\sm B_r$, paying an infinitesimal amount of time. As the process evolves, those tiny contributions to the cost do in fact accumulate.

\bigskip

\noindent\textbf{Example 5.} $\W=\R^n\sm B_r$. For $n\geq 2$, $u=+\8$ is approached by a sequence of strategies where the first player chooses Brownian over $B_R\sm B_r$ with $R\to\8$. If $n=1$ instead, we get that $u=r-|x|$. In both cases the conclusion might result surprising once again, as the optimal strategy for the first player is to always choose the Brownian strategy, paying zero for the time, though the value turns out to be positive, or even infinite.

\bigskip

In the next section we recall the definition of viscosity solution used in the paper, and comment on its stability properties in the context of \eqref{main_bvp}.

\subsection{Viscosity solutions and stability properties}\label{subsec:pre}
 
Viscosity solutions are defined as in the classical literature. We use \cite{Crandall-Ishii-Lions1992} as our main reference in what follows.

\begin{definition}[Viscosity solution]\label{def_viscsol}
Given $u \in {\rm LSC}(\W)$ and $f \in C(\W)$, we say that it is a viscosity super-solution of
\[
\min(-\D u-r,|Du|-1) \geq f \text{ in } \W
\]
if for every $\varphi \in  C^\8(B_\r(x_0))$ such that $B_\r(x_0)\ss \W$ we have that
\[
\min_{B_\r(x_0)}(u-\varphi) = (u-\varphi)(x_0) = 0 \qquad\Rightarrow\qquad \min(-\D \varphi(x_0)-r,|D\varphi(x_0)|-1) \geq f(x_0)
\]
\end{definition}

Similarly, for a viscosity sub-solution of $\min(-\D u-r,|Du|-1) \leq f$ we require that whenever the test function touches $u \in  {\rm USC}(\W)$ from above, then it satisfies the corresponding inequality at the contact point. A viscosity solution is a continuous function which is simultaneously a sub and super-solution.

In the setup of the previous definitions, we say that $\varphi$ touches $u \in {\rm LSC}(\W)$ from below at $x_0$ over $B_\r(x_0)\ss\W$ whenever $\min_{B_\r(x_0)}(u-\varphi) = (u-\varphi)(x_0) = 0$, and a similar convention will be used when testing for super-solutions (from above).

Viscosity solutions are naturally stable under one-sided uniform convergence; see \cite{Crandall-Ishii-Lions1992}. As a reminder, and for further reference, we define the Gamma-convergence $u_k \overset{\G}{\to} u$ next. We say that $(u_k)_{k\in\mathbb{N}}$ Gamma converges to $u$, with $u_k,u:\W\to\mathbb{R}$, when the following conditions are satisfied: 
\begin{enumerate}
\item Whenever $x_k\to x$ in $\W$, we have 
\[
	u(x)\leq \liminf u_k(x_k);
\]
 
\item For every $x\in \W$, there exists a sequence $(x_k)_{k\in\mathbb{N}}$ such that $x_k\to x$ and 
\[
	\limsup u_k(x_k) \leq u(x).
\]
\end{enumerate}

Next, we observe that solutions to a variant of \eqref{main_bvp} are entitled to a stability result.

\begin{property}[Stability]
Let $(u_k)_{k\in\mathbb{N}}\ss {\rm LSC}(\W)$ and $(f_k)_{k\in\mathbb{N}} \ss C(\W)$ be a pair of sequences such that the following holds in viscosity 
\[
\min(-\D u_k-r,|Du_k|-1) \geq f_k \text{ in } \W.
\]
Then
\[
u_k \overset{\G}{\to} u \text{ and } f_k \overset{\text{unif.}}{\to} f \qquad \Rightarrow \qquad \min(-\D u-r,|Du|-1) \geq f.
\]
\end{property}


Thanks to the method of doubling the variables, we get that the comparison principle between sub-solutions and strict super-solutions holds; see, for instance \cite{Crandall-Ishii-Lions1992}. Here $u\in {\rm LSC}(\W)$ is a strict super-solution of $\min(-\D u-r,|Du|-1) \geq 0$ if there exists some $f>0$ such that $u$ is a super-solution of $\min(-\D u-r,|Du|-1) \geq f$.

Because of stability, the full comparison principle follows if we show that any super-solution can be approximated by strict super-solutions.

\begin{property}[Comparison Principle]
Let $r\geq 0$ and $\W \ss B_R$. Suppose $u,-v\in {\rm USC}(\overline\W)$ are such that the following hold in the viscosity sense:
\[
\min(-\D u-r,|Du|-1)\leq 0 \text{ in } \W \qquad\text{ and } \qquad \min(-\D v-r,|Dv|-1)\geq 0 \text{ in } \W.
\]
Then
\[
u\leq v \text{ on } \p\W \qquad \Rightarrow \qquad u\leq v \text{ in } \W.
\]
\end{property}

\begin{proof}
Assume without loss of generality that $v\geq 0$. Choose $0<\e,\d\ll 1$ small enough, such that 
\[
	w = (1+\e)v + \d(R^2-|x|^2),
\]
satisfies
\[
-\D w -r \geq 2n\d \qquad \text{ and } \qquad |Dw| - 1\geq \e - 2R\d 
\]
in the viscosity sense. By taking $\d = \e/(4R)$ we get that
\[
\min(-\D w-r,|Dw|-1) \geq (\e/2)\min(n/R,1)>0.
\]
The result now follows from the comparison principle with strict super solutions after taking $\e\to0$.
\end{proof}

\subsection{Dirichlet problem}

In order to construct solutions of \eqref{main_bvp} we resort to the Perron's method which depends on the comparison principle, which we have proved for bounded domains, and the existence of appropriated barriers.

\begin{property}[Boundary Value Problem]\label{prop_dirglobalreg}
Given $r\geq 0$, $\W\ss\R^n$ a bounded domain with an exterior cone condition, and $g\in C(\p\W)$, there exists a unique solution of
\[
\begin{cases}
\min(-\D u-r,|Du|-1) = 0 \text{ in } \W\\
u = g \text{ on } \p\W,
\end{cases}
\]
Moreover, if the exterior cone condition is uniform, then the solution has a modulus of continuity depending only on $r$, $\diam\W$, the modulus of continuity of $g$, and the parameters from the uniform exterior cone condition (radius and angle).
\end{property}

\begin{proof}
Given that the equation is invariant by vertical translations, let us assume that $\|g\|_{L^\8(\p\W)} = \osc_{\p\W}g$, so that quantities depending on $\|g\|_{L^\8(\p\W)}$ do in fact depend on the modulus of continuity of $g$.

For the construction of the barriers let us fix now $0\in \p \W$ and for some angle $\theta\in(0,\pi/2)$ and radius $\r>0$
\[
C = \{x \in \R^n \ | \ x_1 \geq |x|\cos \theta\} \qquad\text{ such that }\qquad C\cap B_\r \ss \R^n \sm \overline{\W}
\]
Let $h(x) = |x|^\a\phi(x/|x|)>0$ be the harmonic function in $\R^n \sm C$ with $h = 0$ on $\p C \sm \p B_\r$ and $h(-e_1) = 1$. Besides the harmonicity and the positivity of $h$, it is important to notice that $\inf_{\W}|Dh|>0$ because $\a \in (0,1)$ (given that $\theta\in(0,\pi/2)$). The importance of this last observation is that for some constant $C_0>0$ sufficiently large
\[ 
 \varphi(x) = C_0h(x) - (r/(2n))|x|^2
\]
is a super solution.

Let us now use this barrier to get a modulus of continuity at the origin. Given $\e>0$, let $\d_0>0$ such that $|g| <\e$ in $\p \W \cap B_{\d_0}$. Consider now the following candidates for upper ($+$) and lower ($-$) barriers
\[
 \varphi_\pm(x) = \pm(\e + C_0h(x)) - (r/(2n))|x|^2.
\]
For $C_0$ even larger we get that $|\varphi_\pm| \geq \|g\|_{L^\8(\p\W)}$ on $\p\W\sm B_{\d_0}$, so that by the comparison principle we get $\varphi_- \leq u \leq \varphi_+$. Finally, given that $\lim_{|x|\to 0} h(x) = 0$ there exists a $\d\in(0,\d_0)$ such that $|u| \leq 2\e$ in $\overline \W \cap B_\d$.

Up to this point we have shown that $u$ takes the boundary value $g$ and the existence of a modulus of continuity $\s$ for $u$ at boundary points (provided that the uniformity of the exterior cone condition). To get the global regularity we just use the comparison principle. Given $x_0,y_0\in \W$ let $\overline u(x) := u(x-(y_0-x_0))+\s(|y_0-x_0|)$ defined over $\widetilde \W = \overline\W + (y_0-x_0)$. It satisfies $\overline u\geq u$ over $\p(\widetilde \W\cap \W)$ by the barrier estimate. Thanks to the comparison principle this implies $\overline u(y_0) = u(x_0)+\s(|y_0-x_0|)\geq u(y_0)$ which is the desired inequality.
\end{proof}

\begin{remark}
 If $g \in C^{0,1}(\p\W)$ and $\W$ have a uniform exterior ball condition, a modification of the previous construction provides a Lipschitz modulus of continuity for $u$ in $\W$. In the next section we will show however that $u \in C^{0,1}_{loc}(\W)$ regardless of the modulus of continuity of $g$ and the regularity of $\W$.  
\end{remark}

\begin{remark}\label{remark_globapp}
When establishing local Lipschitz regularity (see Theorem \ref{thm:lip}) we apply Property \ref{prop_dirglobalreg} to an approximated problem. We notice Property \ref{prop_dirglobalreg} is available in this context as well, since the barriers in the approximated setting are the same as above and, moreover, they do not depend on the approximation parameter $\varepsilon>0$.
\end{remark}

\section{Interior Regularity}\label{sec:reg}

In this section we show the main regularity results in this paper. We start by considering an approximating problem. In fact, we modify our equation by adding a small viscosity term to the eikonal regime. It leads to 
\[
	\min(-\D u-r,-\varepsilon\D u+|Du|-1)=0 \text{ in } \W,
\]
which is equivalent to
\begin{equation}\label{eq_eq0}
	-\e\D u = \max(\e r,1-|Du|) \text{ in } \W.
\end{equation}
We will show that solutions to \eqref{eq_eq0} are $C^2$-approximations to the actual solutions of our original equation, with an interior H\"older estimate for $(|Du|-(1+\l))_+$, independent of $\e$, but unfortunately dependent on $\l>0$. The integrability of the second derivative will be used just as a technical device to rigorously get the sub-harmonicity of $(|Du|^2-1)_+$.

We notice that even smoother approximations of the equation could be obtained, using for instance
\[
	-\e \D u = \d\ln(e^{\e r/\d}+e^{1-|Du|^2/\d}) \text{ in } \W,
\]
instead of \eqref{eq_eq0}. However, for our arguments $C^2$-regularity of the approximating problem will be enough. Finally notice that as the results in this section are local, we can assume that the regularity provided from the approximation holds globally as part of the hypotheses.



\begin{proposition}[Convergence of the approximating problem]\label{prop_convapprox}
Let $(\e_k)_{k\in\mathbb{N}}\ss\mathbb{R}$ and $(u_k)_{k\in\mathbb{N}}\ss C(\W)$ be sequences such that 
\[
	-\e_k \D u_k = \max(\e_k r, 1-|D u_k|) \hspace{.4in} \mbox{ in }\hspace{.1in} \W,
\]
in the viscosity sense. Suppose $u_k\to u$ locally uniformly and $\e_k\to0$, as $k\to\infty$. Then $u$ is a viscosity solution of
\[
\min(\D u -r,|Du|-1)=0\hspace{.4in} \text{ in }\hspace{.1in} \W.
\]
\end{proposition}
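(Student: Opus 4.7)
The plan is to invoke the half-relaxed limits / classical stability machinery for viscosity solutions (\cite{Crandall-Ishii-Lions1992}) applied to the family $F_k(p, X) := -\varepsilon_k \operatorname{tr} X - \max(\varepsilon_k r,\, 1-|p|)$. Since $u_k \to u$ locally uniformly, I only need to verify, pointwise at test-function contact points, that the relevant limsup/liminf of $F_k$ passes to the correct one-sided inequality for the limiting operator $F(p,X) := \min(-\operatorname{tr} X - r,\, |p|-1)$. I will carry this out directly by hand via the standard perturbation of the test function, because the argument in each direction is short and explicit.

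\textbf{Supersolution direction.} Let $\varphi \in C^2$ touch $u$ from below at $x_0$ strictly (replacing $\varphi$ by $\varphi(x) - |x-x_0|^4$ if needed). By uniform convergence, $u_k - \varphi$ attains a local minimum at some $x_k \to x_0$. Applying the supersolution property of $u_k$ at $x_k$,
\[
-\varepsilon_k \Delta\varphi(x_k) \;\geq\; \max\bigl(\varepsilon_k r,\; 1 - |D\varphi(x_k)|\bigr).
\]
Splitting the $\max$ into its two arguments gives simultaneously $-\Delta\varphi(x_k) \geq r$ and $|D\varphi(x_k)| \geq 1 + \varepsilon_k \Delta\varphi(x_k)$. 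Since $\varphi\in C^2$ and $\varepsilon_k\to 0$, sending $k\to\infty$ yields $-\Delta\varphi(x_0)-r \geq 0$ and $|D\varphi(x_0)|-1 \geq 0$, hence $\min(-\Delta\varphi(x_0)-r, |D\varphi(x_0)|-1) \geq 0$.

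\textbf{Subsolution direction.} Let $\varphi \in C^2$ touch $u$ from above at $x_0$ strictly. Again there are contact points $x_k \to x_0$ where $u_k - \varphi$ attains a local maximum, and the subsolution property of $u_k$ gives
\[
-\varepsilon_k \Delta\varphi(x_k) \;\leq\; \max\bigl(\varepsilon_k r,\; 1 - |D\varphi(x_k)|\bigr).
\]
Suppose, toward a contradiction, that $-\Delta\varphi(x_0) - r > 0$ \emph{and} $|D\varphi(x_0)| - 1 > 0$. By continuity of $D\varphi$ and $\Delta\varphi$, for $k$ sufficiently large we have $|D\varphi(x_k)| > 1$ and $-\Delta\varphi(x_k) > r$. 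The first bound gives $1 - |D\varphi(x_k)| < 0 < \varepsilon_k r$, so $\max(\varepsilon_k r, 1-|D\varphi(x_k)|) = \varepsilon_k r$. Dividing the displayed inequality by $\varepsilon_k > 0$ then yields $-\Delta\varphi(x_k) \leq r$, contradicting $-\Delta\varphi(x_k) > r$. Therefore $\min(-\Delta\varphi(x_0)-r,\, |D\varphi(x_0)|-1) \leq 0$.

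\textbf{Main obstacle.} There is no serious analytic obstacle; everything is a standard application of uniform convergence and continuity of $D\varphi,\Delta\varphi$. The only place that demands care is the subsolution direction, where the nonlinearity is the maximum of a term that vanishes with $\varepsilon_k$ and a term that does not. The dichotomy argument above exploits precisely this asymmetry: once one knows $|D\varphi(x_0)|>1$, the eikonal branch becomes negative and is swallowed by the vanishing Brownian branch $\varepsilon_k r$, forcing the Laplacian inequality $-\Delta\varphi(x_0)\leq r$ to persist in the limit. I would present the proof in exactly this order: strict perturbation of the test function, extraction of convergent contact points, and then the straightforward passage to the limit in each of the two branches.
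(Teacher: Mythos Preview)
Your proof is correct and follows essentially the same approach as the paper's: perturb the test function, extract contact points $x_k\to x_0$ via uniform convergence, and pass to the limit in the two branches of the $\max$. The only cosmetic difference is that in the supersolution direction you derive $|D\varphi(x_k)|\geq 1+\varepsilon_k\Delta\varphi(x_k)$ directly and let $\varepsilon_k\to 0$, whereas the paper argues by contradiction (assuming $|D\varphi(x_0)|<1-\theta$ and reaching $-\varepsilon_k\Delta\varphi(x_k)\geq\theta$); these are equivalent.
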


\begin{proof}
Let $B_\r(x_0) \Subset \W$ and $\varphi \in C^\8(B_\r(x_0))$ be such that $\varphi$ strictly touches $u$ from below at $x_0$ over $B_\r(x_0)$. By the uniform convergence we deduce the existence of a sequence of real numbers $(c_k)_{k\in\mathbb{N}}\ss\mathbb{R}$ and a sequence $(x_k)_{k\in\mathbb{K}}$ in $\mathbb{R}^d$ such that $c_k\to0$, $x_k\to x_0$, and the test function $\varphi + c_k$ also touches $u_k$ from below at $x_k$ over $B_\r(x_0)$. From this fact we will deduce the two required inequalities to check that $u$ is a super-solution.

First, notice that 
\[
-\e_k \D\varphi(x_k) \geq \max(\e_k r,1-|D\varphi|(x_k)) \geq \e_k r,
\] 
which implies $-\D \varphi(x_0) \geq r$.

Secondly, we claim that $|D\varphi|(x_0) \geq 1$. Indeed, if on the contrary were $|D\varphi|(x_0) < 1-\theta$, for some $\theta\in (0,1)$, we would get $|D\varphi(x_k)| < 1-\theta$ and 
\[
\max(\e_k r,1-|D\varphi(x_k)|) \geq \theta,
\] 
for large values of $k\gg 1$. As a consequence, it would hold $-\e_k \D \varphi (x_k) \geq \theta$, which contradicts the smoothness of $\varphi$ around $x_0$.

Let now $B_\r(x_0) \Subset \W$ and $\varphi \in C^\8(B_\r(x_0))$ such that $\varphi$ strictly touches $u$ from above at $x_0$ over $B_\r(x_0)$. Assuming that $|D\varphi|(x_0)>1$, our goal is to show that $-\D\varphi(x_0)\leq0$. Again by the uniform convergence we get that for some $c_k\to 0$ and $x_k\to x_0$, the test function $\varphi + c_k$ also touches $u_k$ from above at $x_k$ over $B_\r(x_0)$. Moreover, if we assume $k$ sufficiently large we still have $|D\varphi|(x_k)>1$ such that $-\e_k\D \varphi(x_k) \leq \max(\e_k r, 1-|D\varphi|(x_k)) = \e_k r$; it gives the desired sub-solution inequality in the limit and concludes the proof.
\end{proof}

In the sequel, we study \eqref{eq_eq0} and resort to the compactness from Proposition \ref{prop_dirglobalreg}, the stability from Proposition \ref{prop_convapprox}, and the uniqueness of solutions to produce interior regularity estimates for our original problem.

\subsection{Lipschitz regularity}\label{subsec_lipschitz}

Given that viscosity solutions of \eqref{eq_eq0} are super-harmonic both in the viscosity and the distributional sense, we get the following $H^1$ interior estimate.\\

\begin{proposition}\label{prop_uh1}
Let $u\in C^2(\overline{B_1})$ be a solution to \eqref{eq_eq0}. Then there exists $C>0,$ independent of $\e>0$, such that
\[
	\left\|u\right\|_{H^1(B_{3/4})} \leq C\left\|u\right\|_{L^\infty(B_1)}.
\]
\end{proposition}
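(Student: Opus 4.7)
My plan is a standard Caccioppoli-type energy estimate. Two structural features of \eqref{eq_eq0} make it work: the right-hand side $\max(\varepsilon r,1-|Du|)$ is pointwise non-negative, so $u$ is (classically) superharmonic; and the viscosity parameter $\varepsilon$ multiplies the Laplacian, so it will cancel algebraically against the $\varepsilon$ produced by integration by parts, leaving the estimate uniform in $\varepsilon$.

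Concretely, I would fix a cutoff $\eta\in C^\infty_c(B_1)$ with $\eta\equiv1$ on $B_{3/4}$, $0\le\eta\le 1$, and $|D\eta|\le C_n$, and set $M:=\|u\|_{L^\infty(B_1)}$, so that $M-u\geq 0$ on $B_1$. Multiplying \eqref{eq_eq0} by the admissible test function $\eta^2(M-u)$ (legitimate because $u\in C^2(\overline{B_1})$ and $\eta$ is compactly supported) and integrating by parts over $B_1$ yields the identity
\[
\varepsilon\int_{B_1}\eta^2|Du|^2\,dx+\int_{B_1}\max(\varepsilon r,1-|Du|)\,\eta^2(M-u)\,dx=2\varepsilon\int_{B_1}\eta(M-u)\,Du\cdot D\eta\,dx.
\]

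Since both terms on the left are non-negative, I would discard the second one, divide through by $\varepsilon$ (this is the step that produces the uniform bound), and apply Young's inequality on the right to absorb $\tfrac{1}{2}\int\eta^2|Du|^2$ back to the left. Using $(M-u)^2\le 4\|u\|_{L^\infty(B_1)}^2$ and $\eta\equiv 1$ on $B_{3/4}$, this leaves
\[
\int_{B_{3/4}}|Du|^2\,dx\le C\int_{B_1}(M-u)^2|D\eta|^2\,dx\le C\|u\|_{L^\infty(B_1)}^2,
\]
which, combined with the trivial bound $\|u\|_{L^2(B_{3/4})}\le C\|u\|_{L^\infty(B_1)}$, gives the desired $H^1$ estimate.

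I do not anticipate a genuine obstacle; the decisive choice is to test against $M-u$ rather than $u$, which ensures a definite sign for the discarded term $\int\max(\varepsilon r,1-|Du|)\,\eta^2(M-u)\,dx$. Testing directly with $\eta^2 u$ would leave a contribution of indeterminate sign on the right-hand side and would in general degrade the $\varepsilon$-uniform character of the bound.
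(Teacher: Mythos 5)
Your argument is correct and is essentially the paper's proof: the paper also exploits that $\max(\e r,1-|Du|)\ge 0$ makes $u$ superharmonic, sets $v=\|u\|_{L^\infty(B_1)}-u\ge 0$, and runs exactly this Caccioppoli estimate by testing against $\phi^2 v$ with Young's inequality to absorb the gradient term. The only cosmetic difference is that the paper passes to the subharmonic inequality $-\Delta v\le 0$ (with $\e$ already cancelled) before testing, whereas you test the full equation and then discard the signed term and divide by $\e$; the content is the same.
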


\begin{proof}
The function $v=\|u\|_{L^\infty(B_1)}-u$ is non-negative and sub-harmonic. By integrating $-\D v \leq 0$ against $\phi^2 v$, where $\phi\in C^\infty_0(B_1)$ is a non-negative test function with $\phi \equiv 1$ over $B_{3/4}$, we obtain
\[
\int_{B_1}|Dv|^2\phi^2 \leq \int_{B_1}v\phi |Dv||D\phi|\leq\ \frac{1}{2}\int_{B_1}|Dv|^2\phi^2+\frac{1}{2}\int_{B_1}|D\phi|^2v^2.
\]
which gives us the desired estimate after rearranging the terms.
\end{proof}

\begin{lemma}\label{lem:lip}
Let $u\in C^2(\overline{B_1})$ be a solution to \eqref{eq_eq0}. Then there exists $C>0,$ independent of $\e>0$, such that
\[
	\|Du\|_{L^\8(B_{1/2})} \leq C\left(\left\|u\right\|_{L^\infty(B_1)}+1\right).
\]
\end{lemma}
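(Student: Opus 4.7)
The plan is a Bernstein-type argument built around the auxiliary function $w := (|Du|^2 - 1)_+$. I will show that $w$ is subharmonic on $B_1$; once this is established, the Lipschitz bound drops out of the standard mean-value estimate for subharmonic functions combined with the $H^1$ control from Proposition \ref{prop_uh1}. The choice of $w$ is motivated by the observation that on the ``Brownian region'' $U := \{|Du| > 1\}$, equation \eqref{eq_eq0} reduces to $-\Delta u = r$ (a Poisson equation with constant right-hand side), the regime in which the Bernstein identity is cleanest.

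To verify the subharmonicity of $w$ I would work in the viscosity sense and split into two cases. On the open set $U$, interior elliptic regularity upgrades $u$ to a smooth function, and the classical Bernstein computation gives
\[
\Delta\bigl(|Du|^2\bigr) \,=\, 2|D^2 u|^2 + 2\, Du\cdot D(\Delta u) \,=\, 2|D^2 u|^2 \,\geq\, 0,
\]
since $\Delta u \equiv -r$ is constant on $U$. Hence $w = |Du|^2 - 1$ is classically subharmonic on $U$, so any $C^2$ test function touching $w$ from above at a point of $U$ inherits $\Delta \phi \geq \Delta w \geq 0$. At a point $x_0 \in B_1$ with $w(x_0) = 0$, any test function $\phi \in C^2$ touching $w$ from above satisfies $\phi(x_0) = 0$ and $\phi \geq w \geq 0$ locally, which forces $x_0$ to be a local minimum of $\phi$ and hence $\Delta \phi(x_0) \geq 0$. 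The two cases together show $-\Delta w \leq 0$ in the viscosity (and therefore classical) sense on $B_1$.

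With $w$ subharmonic, the sub-mean-value inequality gives
\[
\sup_{B_{1/2}} w \,\leq\, C \int_{B_{3/4}} w\, dx \,\leq\, C \int_{B_{3/4}} |Du|^2 \, dx \,\leq\, C\, \|u\|_{L^\infty(B_1)}^2,
\]
the last step being Proposition \ref{prop_uh1}. Since $|Du|^2 \leq 1 + w$ pointwise, the conclusion $\sup_{B_{1/2}}|Du|^2 \leq 1 + C\|u\|_{L^\infty(B_1)}^2$ follows at once, and the stated estimate comes out by taking square roots. The $\varepsilon$-independence of all constants is automatic because the Bernstein identity on $U$ does not see the small parameter: the term $D(\Delta u)$ vanishes there precisely because the right-hand side of the reduced equation is constant. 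The main point requiring care is the gluing of subharmonicity across the free boundary $\{|Du|=1\}$, where the structure of the equation changes; the viscosity framework handles this cleanly, with the sign condition $w \geq 0$ and the continuity of $Du$ being exactly what is needed to avoid a more delicate integration-by-parts argument with boundary terms supported on the level set.
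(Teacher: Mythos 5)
Your proposal is correct and follows essentially the same route as the paper: both hinge on the subharmonicity of $w=(|Du|^2-1)_+$, the sub-mean-value inequality, and the $H^1$ bound of Proposition \ref{prop_uh1}. Your Bernstein computation on $\{|Du|>1\}$ (where the equation reduces to $-\Delta u=r$) together with the sign argument at the zero set of $w$ simply supplies, in the viscosity sense, the subharmonicity that the paper asserts via the $C^2$/distributional regularity of the approximating solutions.
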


\begin{proof}
Thanks to the regularity for solutions of \eqref{eq_eq0} we get that $w =(|Du|^2-1)_+ \in H^1(B_{3/4})$ is sub-harmonic. Then by combining the mean value formula with the previous proposition we get
\[
\|Du\|^2_{L^\8(B_{1/2})} \leq \|w\|_{L^\8(B_{1/2})}+1 \leq C(\|Du\|_{L^2(B_{3/4})}^2+1) \leq C(\|u\|_{L^\8(B_1)}^2+1).
\]
\end{proof}

The main consequence of the previous results is that viscosity solutions to our original equation are locally Lipschitz-continuous. This is the subject of the next theorem.

\begin{theorem}\label{thm:lip}
Let $u\in  C(\W)$ be a viscosity solution to
\[
\min(-\D u-r,|Du|-1) = 0 \text{ in } \W.
\]
Then $u \in C^{0,1}_{\rm loc}(\W)$ and for every $\W' \Subset \W$ there exists $C>0$ such that
\[
	\|Du\|_{L^\8(\W')} \leq C\left(\left\|u\right\|_{L^\infty(\W)}+1\right),
\]
where $C=C(d, {\rm dist}(\W',\p\W))$.
\end{theorem}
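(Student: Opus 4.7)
The strategy is to approximate $u$ on small balls $B_\rho(x_0) \Subset \W$ by classical solutions of the regularized problem \eqref{eq_eq0}, transfer the a priori Lipschitz bound of Lemma \ref{lem:lip} to the limit, and localize via a covering argument on $\W'$.

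Fix $x_0 \in \W'$ and pick $\rho>0$ such that $B_\rho(x_0) \Subset \W$ (for instance $\rho := \mathrm{dist}(\W',\p\W)/2$). For each $\e>0$, I would invoke Property \ref{prop_dirglobalreg} (applicable for the regularized equation with $\e$-independent barriers, by Remark \ref{remark_globapp}) to obtain a unique $u_\e \in C(\overline{B_\rho(x_0)})$ solving
\begin{equation*}
-\e\Delta u_\e = \max(\e r,\,1-|Du_\e|) \ \text{in}\ B_\rho(x_0), \qquad u_\e = u\ \text{on}\ \p B_\rho(x_0),
\end{equation*}
with modulus of continuity independent of $\e$. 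Standard quasi-linear elliptic regularity upgrades $u_\e$ to $C^2$ in the interior. Arzelà--Ascoli extracts a uniformly convergent subsequence whose limit, by the stability Proposition \ref{prop_convapprox}, is a viscosity solution of $\min(-\Delta w-r,|Dw|-1)=0$ in $B_\rho(x_0)$ with $w=u$ on $\p B_\rho(x_0)$; the comparison principle forces $w\equiv u$, hence the whole family $u_\e \to u$ uniformly on $\overline{B_\rho(x_0)}$.

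Next, I would apply Lemma \ref{lem:lip} to $v_\e(y) := u_\e(x_0 + \rho y)/\rho$ on $B_1$. A direct calculation gives $Dv_\e(y) = Du_\e(x_0+\rho y)$ and $\Delta v_\e(y) = \rho\,\Delta u_\e(x_0+\rho y)$, so $v_\e$ satisfies \eqref{eq_eq0} on $B_1$ with rescaled parameters $\tilde\e := \e/\rho$ and $\tilde r := r\rho$. Since the constant in Lemma \ref{lem:lip} is independent of these parameters,
\begin{equation*}
\|D v_\e\|_{L^\infty(B_{1/2})} \leq C\bigl(\|v_\e\|_{L^\infty(B_1)} + 1\bigr).
\end{equation*}
Undoing the scaling yields $\|Du_\e\|_{L^\infty(B_{\rho/2}(x_0))} \leq C\bigl(\|u_\e\|_{L^\infty(B_\rho(x_0))}/\rho + 1\bigr)$, and a maximum-principle comparison with a quadratic barrier absorbing the $r$-term bounds $\|u_\e\|_{L^\infty(B_\rho(x_0))}$ by $\|u\|_{L^\infty(\W)} + Cr\rho^2$ uniformly in $\e$.

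Passing to the uniform limit transfers this Lipschitz bound to $u$ on $B_{\rho/2}(x_0)$, with constant depending only on $n$, $\rho$, $r$, and $\|u\|_{L^\infty(\W)}$. Connecting any two points in $\W'$ by a finite chain of segments of length at most $\rho/2$, each contained in some such half-ball, produces the stated global estimate on $\W'$. The main technical obstacle is the $\e$-independence of the modulus of continuity of $u_\e$ on $\p B_\rho(x_0)$, which is needed both for Arzelà--Ascoli to extract a limit and for the comparison principle to identify that limit with $u$; this is exactly what Remark \ref{remark_globapp} guarantees via the $\e$-independence of the boundary barriers.
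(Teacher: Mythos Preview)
Your proposal is correct and follows essentially the same approach as the paper: solve the $\e$-regularized Dirichlet problem on a ball with boundary data $u$, use Property~\ref{prop_dirglobalreg} and Remark~\ref{remark_globapp} for $\e$-independent equicontinuity, pass to the limit via Proposition~\ref{prop_convapprox} and the comparison principle to identify the limit with $u$, and transfer the Lipschitz estimate of Lemma~\ref{lem:lip}. The paper's proof is terser---it absorbs the scaling and covering into ``a standard localization and scaling argument'' and does not spell out the $L^\infty$ bound for $u_\e$---but the substance is the same.
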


\begin{proof}
By a standard localization and scaling argument it suffices to consider the problem over $B_1$ with $u \in C(\overline{B_1})$. Consider a family $(u_\e)_{\e>0}\ss C^2(\overline{B_1})$ of solutions to \eqref{eq_eq0} for $\e>0$ with boundary data $u_\e = u$ over $\p B_1$. 

This family is uniformly bounded and equicontinuous over $\overline{B_1}$, thanks to the comparison principle (cf. Property \ref{prop_dirglobalreg} and Remark \ref{remark_globapp}). Compactness and the uniqueness for the Dirichlet problem yields $u_\e \to u$, locally uniformly over $\overline{B_1}$. Therefore $u$ inherits the Lipschitz estimate from Lemma \ref{lem:lip}.
\end{proof}

Now that it has been established that viscosity solutions are Lipschitz regular, the following corollary is a straightforward consequence of the classical theory of first order Hamilton-Jacobi equations. See for instance Chapter 10 in \cite{MR2597943}.

\begin{corollary}
Let $u\in  C(\W)$ be a viscosity solution to
\[
\min(-\D u-r,|Du|-1) = 0 \text{ in } \W
\]
Then $|\{|Du|<1\}|=0$.
\end{corollary}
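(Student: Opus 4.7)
The plan is to combine the Lipschitz regularity obtained in Theorem \ref{thm:lip} with the standard identification between viscosity and almost-everywhere notions of solution for first-order Hamilton--Jacobi inequalities.

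First, I would observe that $u$ is, in particular, a viscosity super-solution of the first-order inequality $|Du| - 1 \geq 0$: for any smooth $\varphi$ touching $u$ from below at a point $x_0$, the super-solution property for the full equation gives $\min(-\D\varphi(x_0) - r, |D\varphi(x_0)| - 1) \geq 0$, and in particular $|D\varphi(x_0)| \geq 1$. Since by Theorem \ref{thm:lip} the function $u$ is locally Lipschitz in $\W$, Rademacher's theorem ensures that $u$ is differentiable at almost every $x_0 \in \W$; it then suffices to verify $|Du(x_0)| \geq 1$ at every such $x_0$.

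Set $p_0 := Du(x_0)$. Differentiability at $x_0$ means $u(x) - u(x_0) - p_0\cdot(x-x_0) = o(|x-x_0|)$ as $x \to x_0$, so $p_0$ lies in the Fr\'echet sub-differential $D^- u(x_0)$. By the classical equivalence between the Fr\'echet and the test-function descriptions of $D^- u(x_0)$---standard in the first-order viscosity theory; see Chapter 10 of \cite{MR2597943}---there exists a smooth $\varphi$ touching $u$ from below at $x_0$ with $D\varphi(x_0) = p_0$. Applying the viscosity super-solution inequality to this $\varphi$ yields $|p_0| \geq 1$, and the conclusion $|\{|Du|<1\}| = 0$ follows.

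The main technical point I expect to grapple with is precisely this equivalence. For a generic Lipschitz function the remainder $u(x) - u(x_0) - p_0\cdot(x-x_0)$ may decay slower than any fixed power of $|x-x_0|$, so a naive quadratic ansatz $\varphi(x) = u(x_0) + p_0\cdot(x-x_0) - C|x-x_0|^2$ need not touch $u$ from below; one instead smooths a less regular test function while preserving both the contact and the prescribed gradient, a construction by now standard in the viscosity solutions literature.
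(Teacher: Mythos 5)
Your proposal is correct and is essentially the paper's own argument: the paper simply invokes the classical consistency theory for Lipschitz viscosity solutions of first-order Hamilton--Jacobi equations (Chapter 10 of \cite{MR2597943}), which is exactly the combination you spell out of Theorem \ref{thm:lip}, Rademacher's theorem, and the touching lemma identifying $Du(x_0)\in D^-u(x_0)$ with the gradient of a smooth test function touching from below, to which one applies the super-solution inequality $|D\varphi(x_0)|\geq 1$. The technical point you flag (smoothing the test function while preserving contact and gradient) is precisely the standard lemma used in that reference, so there is no gap.
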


\subsection{Continuity of $|Du|$}\label{subsec_mainresult}



In this section we detail the proof of Theorem \ref{thm:main}. Our approach is inspired by the work in \cite{MR1264526} for the $p$-Laplace equation. We show that for any $\l\in(0,1)$ and $e\in\p B_1$, $(\p_e u-(1+\l))_+$ has an interior H\"older estimate depending on $\l$. The strategy consists on noticing that, depending on the size of $\{\p_e u \geq 1+\l\} \cap B_1$, either $u$ is flat or the oscillation of $(\p_e u-(1+\l))_+$ has to diminish.

We will keep working with the approximations $u \in C^2(B_1)$ for $\e \in(0,1)$, and assume the following hypotheses:
\begin{align}\label{hyp1}
\begin{cases}
-\e\D u \geq \max(0,1-|Du|) \text{ in } B_1\\
-\e\D u \leq \max(\e,1-|Du|) \text{ in } B_1
\end{cases}
\end{align}
These equations are now independent of the parameter $r$ and remain invariant by Lipschitz scalings (that zoom into smaller scales). We will also assume that
\begin{align}\label{hyp2}
(\p_e u-1)_+ \text{ is sub-harmonic.}
\end{align}
Finally, the re-normalization assumptions we fix from now on are
\begin{align}\label{hyp3}
u(0) = 0 \quad \text{ and } \quad \|Du\|_{L^\8(B_1)} \leq 1+M \quad \text{for some fixed } M\geq 1
\end{align}

\begin{remark}
Assumption \eqref{hyp2} seems to be the only technical obstacle that require us to work with the approximating solutions, instead of directly using the limit. The Lipschitz regularity gives only a weak convergence of the gradients, however it does not seem trivial to recover the sub-harmonicity of $(\p_e u-1)_+$ from this fact. This sub-harmonicity is only used in Lemma \ref{lem_osc1}.
\end{remark}

As a first step we show that if the Brownian region covers a large fraction of $B_1$, then $u$ is flat in a smaller scale.

\begin{lemma}[Improvement of flatness]\label{lem_flat}
Suppose \eqref{hyp1} and \eqref{hyp3}. For every $\d,\l \in (0,1)$ there exist $\eta,\r \in (0,1/2)$, independent of $\e \in (0,1)$, such that if
\[
	\sup_{e\in\p B_1}|\{\p_e u \geq 1+\l\}\cap B_1| \geq (1-\eta)|B_1|
\]
there is a linear function $L$ satisfying
\[
\|u-L\|_{L^\8(B_\r)} \leq \d \r,
\]
with $|DL| \in [1+\l,1+M]$.
\end{lemma}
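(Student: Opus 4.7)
The plan is a compactness-and-contradiction argument in the spirit of Wang. Suppose the conclusion fails for some $\delta_0,\lambda_0\in(0,1)$. With $\rho_0\in(0,1/2)$ to be fixed later, for each $k\in\mathbb{N}$ we extract a counterexample: parameters $\epsilon_k\in(0,1)$ and $e_k\in\partial B_1$, and $u_k\in C^2(\overline{B_1})$ satisfying \eqref{hyp1}--\eqref{hyp3} with $|\{\partial_{e_k}u_k\geq 1+\lambda_0\}\cap B_1|\geq (1-1/k)|B_1|$, but for which no linear $L$ with $|DL|\in[1+\lambda_0,1+M]$ satisfies $\|u_k-L\|_{L^\infty(B_{\rho_0})}\leq \delta_0\rho_0$. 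The Lipschitz bound \eqref{hyp3} together with Arzel\`a--Ascoli yields, along a subsequence, $u_k\to u_\infty$ locally uniformly in $B_1$, $e_k\to e_\infty\in\partial B_1$, and $\epsilon_k\to\epsilon_*\in[0,1]$; the limit $u_\infty$ is Lipschitz with constant $1+M$ and vanishes at the origin.

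The next step transfers the measure hypothesis into pointwise information. The directional derivatives $v_k:=\partial_{e_k}u_k$ are uniformly bounded in $L^\infty$ and, since $u_k\to u_\infty$ uniformly and $e_k\to e_\infty$, converge weakly-$*$ (along a further subsequence) to $\partial_{e_\infty}u_\infty$. As $\int_{B_1}(v_k-(1+\lambda_0))_-\to 0$ by the measure hypothesis and the $L^\infty$-bound, the weak-$*$ lower semicontinuity of this convex functional forces $\partial_{e_\infty}u_\infty\geq 1+\lambda_0$ a.e.\ in $B_1$. Integration along lines parallel to $e_\infty$, combined with continuity of $u_\infty$, upgrades this to the pointwise growth
\[
u_\infty(x+t e_\infty)-u_\infty(x)\geq (1+\lambda_0)\,t,
\]
valid for every $x\in B_1$ and $t>0$ with $x+te_\infty\in B_1$. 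By the viscosity stability of \eqref{hyp1}, $u_\infty$ inherits the same pair of inequalities with $\epsilon_*$ in place of $\epsilon$; in particular $u_\infty$ is superharmonic. For any smooth $\phi$ touching $u_\infty$ from above at $x_0$, the pointwise growth gives $\partial_{e_\infty}\phi(x_0)\geq 1+\lambda_0$, hence $|D\phi(x_0)|>1$; passing to the limit in the upper inequality of \eqref{hyp1} at the contact point then yields $-\Delta\phi(x_0)\leq 1$. Consequently $0\leq -\Delta u_\infty\leq 1$ in viscosity throughout $B_1$, which via Calder\'on--Zygmund places $u_\infty\in C^{1,\alpha}_{\rm loc}(B_1)$ for every $\alpha\in(0,1)$, with constants depending only on $n,M,\alpha$. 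Continuity of $Du_\infty$ further promotes the a.e.\ bound to $|Du_\infty|\geq 1+\lambda_0$ pointwise.

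To conclude, set $L(x):=u_\infty(0)+Du_\infty(0)\cdot x$. Then $|DL|\in[1+\lambda_0,1+M]$ by the preceding pointwise gradient bound and the Lipschitz estimate. Fixing $\alpha=1/2$, the $C^{1,\alpha}$-estimate gives $\|u_\infty-L\|_{L^\infty(B_\rho)}\leq C_0\,\rho^{1+\alpha}$ for $\rho\leq 1/4$, with $C_0=C_0(n,M)$. Choosing $\rho_0\leq \min\{1/4,(\delta_0/(2C_0))^{2}\}$ ensures $C_0\rho_0^{1+\alpha}\leq \delta_0\rho_0/2$; the locally uniform convergence $u_k\to u_\infty$ then yields $\|u_k-L\|_{L^\infty(B_{\rho_0})}\leq \delta_0\rho_0$ for all $k$ sufficiently large, which contradicts the choice of $u_k$. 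The main technical obstacle is the step promoting the a.e.\ directional-derivative bound into a constraint testable against smooth envelopes from above; this is handled by the pointwise monotone growth of $u_\infty$ along $e_\infty$, which forces any smooth barrier touching $u_\infty$ from above to satisfy $|D\phi|>1$ at the contact point, so that the upper inequality in \eqref{hyp1} can be activated in the limit.
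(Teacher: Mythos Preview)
Your argument is correct and follows the same compactness--contradiction scheme as the paper: extract a uniform limit, show the a.e.\ lower bound on the directional derivative persists, deduce $-\Delta u_\infty\in[0,1]$, and contradict the failure of flatness via $C^{1,\alpha}$ estimates. The only substantive difference lies in how you verify that a smooth $\phi$ touching $u_\infty$ from above must satisfy $|D\phi(x_0)|>1$: you integrate the a.e.\ bound $\partial_{e_\infty}u_\infty\geq 1+\lambda_0$ along lines (invoking Fubini and the continuity of $u_\infty$) to obtain the pointwise growth $u_\infty(x+te_\infty)-u_\infty(x)\geq(1+\lambda_0)t$, which immediately forces $\partial_{e_\infty}\phi(x_0)\geq 1+\lambda_0$; the paper instead averages $Du$ over a thin cone around $e$ and lets the aperture shrink. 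Your route is arguably more elementary, while the cone computation is more self-contained at the contact point; both yield the same conclusion and the rest of the proof is identical.
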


\begin{proof}
We reason through a contradiction argument, so let us suppose that for some $\d_0 \in(0,1)$ the following holds: There is a sequence $(u_i)_{i\in\mathbb{N}}$ such that $u_i$ satisfies \eqref{hyp1}, \eqref{hyp3}, and for some $e\in \p B_1$
\[
\lim_{i\to \8} |\{\p_e u_i < 1+\l\}\cap B_1| = 0,
\]
but, for some $\r\in(0,1)$ to be fixed,
\begin{align}\label{contra}
	\|u_i-L\|_{L^\8(B_{\r})} > \d_0 \r
\end{align}
for any linear function $L$ with $|DL| \in  [1+\l,1+M]$.

In this setup we can assume without loss of generality that $u_i\to u \in C^{0,1}(\overline{B_1})$ uniformly and $|\{\p_e u < 1+\l\}\cap B_1| = 0$. Indeed, it suffices to show that $|\{\p_e u \leq \theta\}\cap B_1| = 0$ for any $\theta \in [0,1+\l)$. We can assume that $Du_i \to Du$ weakly star in $L^\8(B_1)$ such that using $\chi_{\{\p_e u\leq \theta\}\cap B_1}$ as a test function
\begin{align*}
\theta|\{\p_e u \leq \theta\}\cap B_1| &\geq \int_{\{\p_e u \leq \theta\}\cap B_1} \p_e u \\
&=\lim_{i\to\8} \int_{\{\p_e u \leq \theta\}\cap B_1} \p_e u_i \geq (1+\l)|\{\p_e u \leq \theta\}\cap B_1|.
\end{align*}
Given that $\theta\in(0,1+\l)$ was arbitrary, this implies $|\{\p_e u<1+\l\}\cap B_1| = 0$.

Let us show that $-\D u \in [0,1]$ in $B_1$. The stability tells us that the limit $u$ is super-harmonic (as a uniform limit of super-harmonic functions) and also satisfies the sub-solution inequality from $\min(-\D u - 1,|Du|-1)\leq 0$ (for a similar reason). We will use $|\{\p_e u <1+\l\}\cap B_1| = 0$ to show that whenever $\varphi$ is a test function that touches $u$ from above at $x_0 \in B_1$, then $|D\varphi|(x_0)>1$; so that $-\D \varphi(x_0) \leq 1$ as expected.

Let us consider a cone around $e$ with angle $\theta \in (0,\pi/2)$ to be fixed sufficiently small
\[
C = \{x\in \R^n: e\cdot x \geq |x|\cos \theta\}.
\]
Then, for $c = \int_{C\cap B_1} e\cdot x/|x|^n dx = \int_{C\cap \p B_1} e\cdot x dS(x)$
\begin{align*}
e\cdot D\varphi(x_0) &= \lim_{r\searrow 0} \frac{1}{cr}\int_{C\cap B_r}\frac{D\varphi(x+x_0)\cdot x}{|x|^n}dx\\
&= \lim_{r\searrow 0} \frac{1}{cr^n}\int_{C\cap \p B_r}(\varphi(x+x_0)-\varphi(x_0))dS(x)\\
&\geq \limsup_{r\searrow 0} \frac{1}{cr^n}\int_{C\cap \p B_r}(u(x+x_0)-u(x_0))dS(x)\\
&= \limsup_{r\searrow 0} \frac{1}{cr^n}\int_{C\cap B_r}\frac{Du(x+x_0)\cdot x}{|x|^n}dx\\
&\geq ((1+\l)\cos\theta -(1+M)\sin\theta)\frac{\int_{C\cap B_1}1/|x|^{n-1}dx}{\int_{C\cap B_1} e\cdot x/|x|^n dx}.
\end{align*}
The last expression can be made strictly larger than one by taking the angle $\theta$ sufficiently small.

Now that we have shown that $-\D u \in [0,1]$ in $B_1$ we get that $u \in C^{1,1/2}(B_{1/2})$ and $L(x) = Du(0)\cdot x$ satisfies $\|u-L\|_{L^\8(B_\r)} \leq C\r^{3/2}$ for every $\r$ sufficiently small and some $C>1$. Moreover $|DL| \in [1+\l,1+M]$ and by taking $\r = \d_0^{1/2}/C$ we get the desired contradiction to \eqref{contra} thanks to the uniform convergence of $u_i$ towards $u$.
\end{proof}

Now we consider the alternative case: If for all $e\in \p B_1$, the directional derivative $\p_e u$ gets below $1+\l$ in a positive fraction of $B_1$, then the oscillation of $(\p_e u-(1+\l))_+$ diminishes.

\begin{lemma}[Diminish of oscillation]\label{lem_osc1}
Suppose \eqref{hyp1}, \eqref{hyp2}, and \eqref{hyp3}. For every $\eta, \l \in (0,1)$ and $e\in \p B_1$, there exists $\s \in (0,1)$, independent of $\e$ and $e$, such that if
\[
|\{\p_e u < 1+\l\}\cap B_1| \geq \eta|B_1|,
\]
we get 
\[
\|(\p_e u-(1+\l))_+\|_{L^\8(B_{1/2})} \leq (1-\s)\|(\p_e u-(1+\l))_+\|_{L^\8(B_{1})}.
\]
%
%
\end{lemma}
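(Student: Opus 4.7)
The plan is to exploit hypothesis \eqref{hyp2} to promote $v := (\partial_e u - (1+\lambda))_+$ to a non-negative sub-harmonic function on $B_1$, and then apply the weak Harnack inequality to its deficit from the supremum.

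To check that $v$ is sub-harmonic, note that by \eqref{hyp2} the function $g := (\partial_e u - 1)_+$ is sub-harmonic on $B_1$. Since the maximum of two sub-harmonic functions is sub-harmonic (a constant being trivially sub-harmonic), $\max(g, \lambda)$ is sub-harmonic, and therefore so is
\[
v \;=\; ((\partial_e u - 1)_+ - \lambda)_+ \;=\; \max(g,\lambda) - \lambda.
\]
Moreover, the zero set of $v$ contains $\{\partial_e u < 1+\lambda\}$, which by hypothesis has $|\{v = 0\}\cap B_1| \geq \eta|B_1|$.

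Set $M := \|v\|_{L^\infty(B_1)}$; we may assume $M > 0$, otherwise the conclusion is trivial. Consider $f := (M - v)/M$, which is a non-negative super-harmonic function on $B_1$ satisfying $|\{f \geq 1\} \cap B_1| \geq \eta|B_1|$. By the classical weak Harnack inequality for super-solutions of the Laplace equation (which in its measure-theoretic form states that a nonnegative super-harmonic function exceeding $1$ on a set of positive relative measure is bounded below on a smaller ball), there exists a constant $c = c(\eta, n) \in (0,1)$ such that $\inf_{B_{1/2}} f \geq c$. Translating this back,
\[
\sup_{B_{1/2}} v \;\leq\; (1 - c)\,M \;=\; (1-c)\,\|v\|_{L^\infty(B_1)},
\]
which gives the desired estimate with $\sigma := c \in (0,1)$ depending only on $\eta$ and $n$, and in particular independent of $\epsilon$, $e$, $\lambda$, and $M$.

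The only real step is the sub-harmonicity identification; once that is granted (and this is precisely the role of assumption \eqref{hyp2}, which is why the approximating $C^2$ solutions are needed), the diminish of oscillation follows directly from a classical result. The hard part, conceptually, has already been absorbed into \eqref{hyp2} itself — one cannot expect $(\partial_e u - 1)_+$ to be sub-harmonic for the limiting non-smooth $u$, which is why the entire argument is phrased at the level of the viscous approximations before passing to the limit in Theorem \ref{thm:main}.
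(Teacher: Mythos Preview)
Your proof is correct and follows essentially the same route as the paper: both pass to the super-harmonic deficit $N-U$ (your $f$, up to normalization) and use a mean-value / weak Harnack lower bound coming from the measure hypothesis. Your write-up is in fact more complete, since you justify explicitly why $(\partial_e u-(1+\lambda))_+$ is sub-harmonic via the identity $(\partial_e u-(1+\lambda))_+=\max((\partial_e u-1)_+,\lambda)-\lambda$, a step the paper leaves implicit.
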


\begin{proof}
Let $U=(\p_e u-(1+\l))_+$ and $N=\|U\|_{L^\8(B_{1})}$. Using that $N-U \in [0,N]$ is super-harmonic and $|\{N-U \geq N\}\cap B_1| \geq \eta|B_1|$ we get the desired improvement for $N-U$ from below thanks to the mean value formula.
\end{proof}

Putting together both alternatives, the conclusions of Lemma \ref{lem_flat} and Lemma \ref{lem_osc1} imply that either the oscillation of $(\p_e u-(1+\l))_+$ decreases or $u$ should be flat around a linear function. The following corollary states the result that we obtain after iterations.

\begin{corollary}\label{cor}
Suppose \eqref{hyp1}, \eqref{hyp2}, and \eqref{hyp3}. For every $\d,\l\in(0,1)$ and $e\in\p B_1$, there exist $\r,\s\in (0,1)$ and $C_0\geq 2$, independent of $\e>0$ and $e$, such that if $\p_e u(0) > 1+\l$, then
\[
k = \sup \3 i \in \N \ | \ \|(\p_e u-(1+\l))_+\|_{L^\8\1B_{\r^j}\2} \leq (1-\s)^jM \;\;\forall j \in \{0,1,\ldots,(i-1)\}\4 < \8
\]
and
\[
\|u-L\|_{L^\8\1B_{\r^{k}}\2} \leq \d \r^k
\]
for some linear function $L$, with $|DL| \in [1+\l,1+M]$.
\end{corollary}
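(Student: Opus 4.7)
The proof proposal iterates a two-alternative dichotomy built from Lemmas \ref{lem_flat} and \ref{lem_osc1} on the Lipschitz-rescaled functions $v_j(x) := \rho^{-j} u(\rho^j x)$. First I would fix the parameters: for the given $\delta$ and $\lambda$, let $\eta, \rho \in (0, 1/2)$ be the constants provided by Lemma \ref{lem_flat}, and then let $\sigma \in (0, 1)$ be the oscillation contraction rate from Lemma \ref{lem_osc1} applied with this $\eta$. Each $v_j$ inherits \eqref{hyp1}--\eqref{hyp3} under the scaling (the effective parameter for $v_j$ in \eqref{hyp1} becomes $\varepsilon/\rho^j$, which is immaterial since the two lemmas are uniform in $\varepsilon$); the constant $C_0$ may be chosen, e.g., as $1/\rho$, to absorb any auxiliary factors.

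The finiteness of $k$ is essentially free. Since we are working with the $C^2$ approximation, $\partial_e u$ is continuous at $0$, so
\[
\|(\partial_e u - (1+\lambda))_+\|_{L^\infty(B_{\rho^j})} \geq \partial_e u(0) - (1+\lambda) > 0
\]
for every $j$, while $(1-\sigma)^j M \to 0$; hence the oscillation inequality in the definition of $k$ must fail at some finite $j$. Moreover, since $\|(\partial_e u - (1+\lambda))_+\|_{L^\infty(B_1)} \leq M - \lambda < M$, the bound at $j = 0$ is automatic, so $k \geq 1$.

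For the flatness bound I would work with $v := v_{k-1}$ on $B_1$ and examine two alternatives: (A) $\sup_{e' \in \partial B_1}|\{\partial_{e'} v \geq 1 + \lambda\} \cap B_1| \geq (1-\eta)|B_1|$, or (B) the opposite, which forces $|\{\partial_e v < 1+\lambda\} \cap B_1| > \eta |B_1|$ for our fixed $e$. Assume (B). Lemma \ref{lem_osc1} then gives
\[
\|(\partial_e v - (1+\lambda))_+\|_{L^\infty(B_{1/2})} \leq (1-\sigma)\|(\partial_e v - (1+\lambda))_+\|_{L^\infty(B_1)};
\]
unrescaling, using the inclusion $B_{\rho^k} \subset B_{\rho^{k-1}/2}$ (valid because $\rho \leq 1/2$), and combining with $\|(\partial_e u - (1+\lambda))_+\|_{L^\infty(B_{\rho^{k-1}})} \leq (1-\sigma)^{k-1} M$ (guaranteed by the definition of $k$) would yield $\|(\partial_e u - (1+\lambda))_+\|_{L^\infty(B_{\rho^k})} \leq (1-\sigma)^k M$, contradicting the failure of the oscillation inequality at $j = k$. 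So (A) must hold. Lemma \ref{lem_flat} then provides a linear $L$ with $|DL| \in [1+\lambda, 1+M]$ and $\|v - L\|_{L^\infty(B_\rho)} \leq \delta \rho$; rescaling via $L'(x) := \rho^{k-1} L(x/\rho^{k-1})$ yields the desired $\|u - L'\|_{L^\infty(B_{\rho^k})} \leq \delta \rho^k$ with $|DL'| = |DL| \in [1+\lambda, 1+M]$.

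The main technical obstacle is the compatibility of the two contraction scales in the lemmas (the $\rho$ from Lemma \ref{lem_flat} versus the halving in Lemma \ref{lem_osc1}), which the argument reconciles by insisting $\rho \leq 1/2$ and exploiting the trivial inclusion $B_{\rho^k} \subset B_{\rho^{k-1}/2}$. A secondary bookkeeping point is verifying that the rescaled equation's effective $\varepsilon/\rho^{k-1}$ poses no difficulty, which is handled by the uniformity of the two lemmas in $\varepsilon$.
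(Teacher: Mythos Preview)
Your proposal is correct and follows essentially the same route as the paper: rescale to $v(x)=\rho^{-(k-1)}u(\rho^{k-1}x)$, observe that if Lemma~\ref{lem_osc1} applied at this scale the defining inequality for $k$ would extend to $j=k$, and conclude that the alternative Lemma~\ref{lem_flat} must hold. The paper's proof is a terse two-line version of your argument; your write-up simply makes explicit the reconciliation of the two scales via $\rho\leq 1/2$ and $B_{\rho^k}\subset B_{\rho^{k-1}/2}$, and the bookkeeping around the rescaled $\varepsilon$.
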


\begin{proof}
We have that $k<\8$ because $\p_e u(0) > 1+\l$. Consider $v(x) = \r^{-(k-1)}u(\r^{k-1} x)$. One immediately checks that $v$ satisfies \eqref{hyp1}, \eqref{hyp2}, and \eqref{hyp3}. By the definition of $k$, the hypothesis of Lemma \ref{lem_osc1} can not hold, therefore Lemma \ref{lem_flat} applies instead. We conclude in this way that $\|u-L\|_{L^\8\1B_{\r^{i}}\2} \leq \d \r^i$ for some linear function $L$ with $|DL| \in [1+\l,1+M]$. 
\end{proof}

Now it is the moment to fix the flatness parameter $\d$. By the previous corollary with $r=\r^k$ and $\a = \ln(1-\s)/\ln\r$ we get that $u(x) = L(x) + r^{1+\a}v(x/r)$ can be seen as a perturbation from the linear function $L$. The perturbation $v$ can be made arbitrarily flat (i.e. $\|v\|_{L^\8(B_1)}\leq \d$) and satisfies the following equation in the viscosity sense
\[
-\D v \in [0,1] \text{ in } \{|Dv| < \l\}\cap B_1.
\]
That is to say that whenever $\varphi$ is a smooth test function that touches $v$ from above at some $x_0 \in B_1$ with $|D\varphi|(x_0) < \l$, then $-\D\varphi(x_0) \geq 0$. If on the other hand, $\varphi$ is a test function that touches $v$ from below at $x_0 \in B_1$ with $|D\varphi|(x_0) < \l$, then $-\D\varphi(x_0) \leq 1$.

This family of degenerate elliptic equations has nice regularity estimates under flatness hypotheses. In particular, we have the following $C^{1,\a}$ regularity estimate due to Savin in \cite[Theorem 1.3]{Savin2007}.

\begin{theorem}[\cite{Savin2007}]\label{savin}
For $\l>0$ there exist $\d,\a\in(0,1)$ and $C>0$, such that if $v \in C(B_1)$ is a viscosity solution of
 \[
-\D v \in [0,1] \text{ in } \{|Dv| < \l\}\cap B_1
 \]
with $\|v\|_{L^\8(B_1)}\leq \d$ then $v \in C^{1,\a}(B_{1/2})$ and
\[
 \|v\|_{C^{1,\a}(B_{1/2})}\leq C\d.
\]
\end{theorem}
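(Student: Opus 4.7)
The plan is to follow Savin's small-perturbation iteration scheme, adapted to our degenerate equation. The heuristic is that if $\|v\|_{L^\8(B_1)} \leq \d$ with $\d$ sufficiently small relative to $\l$, then (after suitable rescaling) the gradient $Dv$ is also small, so the ellipticity region $\{|Dv|<\l\}$ effectively fills $B_{1/2}$; the problem reduces to the two-sided Laplace inclusion $-\D v \in [0,1]$, for which $C^{1,\a}$ regularity is standard (via Krylov--Safonov applied to the two one-sided inequalities, or equivalently Calder\'on--Zygmund together with $\D v \in L^\8$).

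The core technical step is a quantitative improvement-of-flatness lemma: there exist $\r,\d_0 \in (0,1)$ and $\a \in (0,1)$, depending only on $\l$, such that if $\|v\|_{L^\8(B_1)} \leq \d \leq \d_0$ then one finds an affine function $L$ with $|DL| \leq C\d$ satisfying $\|v-L\|_{L^\8(B_\r)} \leq \r^{1+\a}\d$. This is proved by compactness--contradiction: otherwise extract a sequence $v_k$ with $\d_k := \|v_k\|_{L^\8(B_1)}\to 0$ failing the conclusion, and study the normalized functions $w_k := v_k/\d_k$ with $\|w_k\|_{L^\8(B_1)}\leq 1$. One-sided Krylov--Safonov-type estimates in the expanding elliptic region $\{|Dv_k|<\l\}$, together with flatness, give uniform H\"older equicontinuity of $(w_k)$; a subsequence converges uniformly on $B_{3/4}$ to some $w$. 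To pass to the viscosity limit, take any smooth $\varphi$ touching $w$ at $x_0$: the perturbation $\d_k\varphi + c_k$ touches $v_k$ at $x_k \to x_0$, and $|D(\d_k\varphi)(x_k)| = \d_k|D\varphi(x_k)| \to 0 < \l$, so the equation on $v_k$ is triggered at $x_k$. Coupled with an appropriate spatial rescaling one identifies $w$ as a solution of $-\D w \in [0,1]$ on $B_{3/4}$, and the classical tangent-plane approximation for $w$ at the origin produces the missing affine function $L$, contradicting the assumed failure.

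Iterating the dichotomy at dyadic scales $\r^k$ produces Cauchy affine approximations $L_k$ whose slopes converge to $Dv(0)$, with the geometric decay $\|v - L_k\|_{L^\8(B_{\r^k})} \leq \r^{k(1+\a)}\d$ summing to $v \in C^{1,\a}$ at the origin with the quantitative bound $\|v\|_{C^{1,\a}} \leq C\d$; translation-invariance of the equation extends this to the uniform estimate on $B_{1/2}$. The main technical difficulty lies in the limit-identification step: the upper bound $-\D v_k \leq 1$ scales badly under the purely vertical normalization (giving $-\D w_k \leq 1/\d_k \to \8$), so one must couple the vertical rescaling with a spatial rescaling $\tau_k := \sqrt{\d_k}$, considering $\widetilde w_k(x) := v_k(\tau_k x)/\d_k$. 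This keeps the Laplace inequality's right-hand side in $[0,1]$ while inflating the ellipticity threshold to $\l/\tau_k \to \8$, so the limit $w$ genuinely satisfies the two-sided inclusion on an expanding domain, supplying the tangent-plane approximation that closes the contradiction and fixes the exponent $\a$.
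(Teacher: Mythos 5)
This statement is not proved in the paper at all: it is quoted from Savin \cite{Savin2007}, and the accompanying remark only explains that the stated $C^{1,\alpha}$ form is a consequence of the techniques developed there. So your proposal has to be measured against Savin's small-perturbation scheme, and as written it has two genuine gaps.

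First, the compactness step is unjustified and, in its heuristic form, circular. From $\|v\|_{L^\infty(B_1)}\leq\delta$ alone you cannot conclude that $Dv$ is small or that $\{|Dv|<\lambda\}$ ``effectively fills $B_{1/2}$'': a merely continuous viscosity solution has no pointwise gradient a priori, and rapidly oscillating functions have tiny sup norm and large slopes. The equation gives information only through test functions whose gradient at the contact point is below $\lambda$, so Krylov--Safonov/Harnack estimates cannot be invoked ``in the elliptic region'': you do not know where that region is, how large it is, or that $v$ is any better than continuous there. The entire technical content of Savin's theorem is precisely a Harnack-type measure estimate for \emph{flat} solutions of such degenerate equations, obtained by sliding paraboloids whose opening and slope are tuned to the flatness $\delta$ so that contact points are forced to carry admissible (small-gradient) test objects, and hence to lie where the equation is active. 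This ingredient, which is what would give the uniform equicontinuity of your sequence $(w_k)$, is missing from the proposal and cannot be replaced by a citation of the uniformly elliptic theory.

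Second, the anisotropic rescaling you introduce to repair the bad scaling of the right-hand side, $\widetilde w_k(x)=v_k(\sqrt{\delta_k}\,x)/\delta_k$, breaks the contradiction scheme rather than saving it. Your improvement-of-flatness lemma posits failure of affine approximation on a ball $B_\rho$ of \emph{fixed} radius, but the limit of $\widetilde w_k$ only encodes the behavior of $v_k$ on balls of radius $O(\sqrt{\delta_k})\to 0$; the tangent-plane approximation of the limit therefore yields an affine approximation of $v_k$ only at scales comparable to $\sqrt{\delta_k}$, which cannot contradict the assumed failure at the fixed scale $\rho$. Moreover the slope transfers back as $\sqrt{\delta_k}\,|DL|$ rather than $O(\delta_k)$, so the competitor produced is not even in the class your lemma allows; and the iteration, which must re-apply the lemma after subtracting affine pieces (thereby shifting the constraint set $\{|Dv|<\lambda\}$ off center and changing the admissible gradients), is never set up so that flatness, the $[0,1]$ right-hand side, and the constraint remain compatible under the zoom. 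Closing the argument along these lines essentially requires reproducing Savin's machinery: the flat Harnack inequality, a compactness/improvement-of-flatness step whose normalization respects both the flatness and the inhomogeneity, and the bookkeeping of the gradient constraint under subtraction of the affine approximations.
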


\begin{remark}
 Theorem 1.3 in \cite{Savin2007} is actually a $C^{2,\a}$ regularity estimate which holds whenever $-\D v$ is a smooth function over $\{|Dv|<\l\}\cap B_1$. Our formulation is a consequence of the techniques developed in such paper. Otherwise, we could have kept track of the constant right-hand side (determined by the parameter $r$) in our equations and apply Theorem 1.3 as originally stated in \cite{Savin2007}. However, we preferred to drop the dependence on $r$ and keep the regularity result which matches the oscillation decay for $U$.  
\end{remark}

Combining this last theorem with Corollary \ref{cor} we finally get the expected modulus of continuity.

\begin{corollary}\label{coro2}
Suppose \eqref{hyp1}, \eqref{hyp2}, and \eqref{hyp3}. For every $\l\in(0,1)$ and $e\in\p B_1$, there exists $\a\in(0,1)$ and $C>0$, independent of $\e$ and $e$, such that $(\p_e u-(1+\l))_+ \in C^\a(B_{1/2})$ and
\[
\|(\p_e u-(1+\l))_+\|_{C^\a(B_{1/2})} \leq C
\]
In particular, $(|Du|-(1+\l))_+ = \sup_{e\in\p B_1} (\p_e u-(1+\l))_+$ has the same modulus of continuity.
\end{corollary}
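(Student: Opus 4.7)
The plan is to combine the iteration-dichotomy of Corollary \ref{cor} with the $C^{1,\a}$-estimate of Theorem \ref{savin} through a scaling interpolation that is uniform in $\e$ and in $e$. First I would fix $\l\in(0,1)$ and $e\in\p B_1$, set $U:=(\p_e u-(1+\l))_+$, and apply Theorem \ref{savin} with parameter $\l$ to produce $\d,\a_S\in(0,1)$ and $C_S$; feeding this $\d$ and the given $\l$ into Corollary \ref{cor} then yields $\r,\s\in(0,1)$. Next I would define $\a_O:=\ln(1-\s)/\ln\r>0$ and the target exponent $\a:=\a_O\a_S/(\a_O+\a_S)$, reducing the corollary to the pointwise estimate $|U(x)-U(y)|\leq C|x-y|^\a$ on arbitrary $x,y\in B_{1/2}$, from which the stated seminorm bound follows.

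For such $x,y$, set $r:=|x-y|$ and assume without loss of generality that $U(y)\geq U(x)$ with $U(y)>0$ (otherwise the bound is trivial); in particular $\p_e u(y)>1+\l$. After recentering and a standard Lipschitz rescaling (which preserves \eqref{hyp1}--\eqref{hyp3}), Corollary \ref{cor} applied at $y$ produces a finite integer $k$ with oscillation control $\|U\|_{L^\8(B_{\r^j}(y))}\leq(1-\s)^j M$ for $j=0,\ldots,k-1$, together with a linear function $L$ satisfying $|DL|\in[1+\l,1+M]$ and the flatness bound $\|u(\cdot)-u(y)-L(\cdot-y)\|_{L^\8(B_{\r^k}(y))}\leq\d\r^k$. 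In the regime $r\geq\r^k/2$ I would choose the dyadic scale $\r^j$ with $\r^{j+1}\leq r<\r^j$ (or $j=k-1$ at the endpoint) to obtain $|U(x)-U(y)|\leq 2(1-\s)^j M\leq Cr^{\a_O}\leq Cr^\a$, using $(1-\s)^j=\r^{j\a_O}$ and $\a\leq\a_O$.

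In the complementary regime $r<\r^k/2$, I would rescale the flat piece to unit size by setting $v(z):=\r^{-k}(u(y+\r^k z)-u(y)-L(\r^k z))$ on $B_1$. One then checks that $\|v\|_{L^\8(B_1)}\leq\d$, and that in the region $\{|Dv|<\l\}$ we have $|Du(y+\r^k z)|\geq|DL|-\l\geq 1$, so \eqref{hyp1} forces $-\D u\in[0,1]$ and hence $-\D v\in[0,\r^k]\subset[0,1]$. Theorem \ref{savin} then delivers $[Dv]_{C^{\a_S}(B_{1/2})}\leq C_S\d$, which pulls back to $|\p_e u(x)-\p_e u(y)|\leq C_S\d(r/\r^k)^{\a_S}$. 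Combining this with the oscillation bound $|U(x)-U(y)|\leq 2(1-\s)^{k-1}M$ via the elementary interpolation $\min(A,B)\leq A^\theta B^{1-\theta}$ with $\theta=\a_O/(\a_O+\a_S)$ should yield $|U(x)-U(y)|\leq Cr^\a$; the very definition of $\a$ as the harmonic mean of $\a_O$ and $\a_S$ is engineered to cancel the $\r^{\pm k}$ factors. The final statement for $|Du|$ then follows from $(|Du|-(1+\l))_+=\sup_{e\in\p B_1}(\p_e u-(1+\l))_+$ together with the elementary fact that the supremum of functions with a uniform Hölder norm inherits that norm.

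The main obstacle will be this second-regime interpolation. Transported from unit scale back to the flat scale $\r^k$, Savin's Hölder seminorm bound carries a divergent factor $\r^{-k\a_S}$ as $k\to\8$, so by itself it cannot produce a scale-uniform estimate; the iteration of Corollary \ref{cor} must supply the compensating factor $\r^{k\a_O}$ through the oscillation decay, and the harmonic-mean exponent $\a$ is forced precisely by the requirement that these two factors cancel in the geometric-mean combination. Ensuring consistency of the resulting constant across the dyadic transition $r\sim\r^k$ between the two regimes, and confirming that the rescaling does not inflate the hypotheses \eqref{hyp1}--\eqref{hyp3}, is the delicate technical point.
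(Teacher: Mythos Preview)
Your proposal is correct and follows the same two-regime strategy as the paper: reduce to a pointwise H\"older estimate at each center with $U>0$, use the oscillation decay from Corollary~\ref{cor} on the outer scales $r\in(\r^k,1)$, and invoke Savin's estimate on the inner scales $r\in(0,\r^k)$ after the Lipschitz rescaling $v(z)=\r^{-k}\bigl(u(y+\r^k z)-u(y)-L(\r^k z)\bigr)$; the passage to $(|Du|-(1+\l))_+$ via the supremum is identical.

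The one substantive difference is in how the two regimes are glued. The paper simply declares $\a=\min(\a_O,\a_S)$ and writes that ``Theorem~\ref{savin} completes the modulus of continuity for $r\in(0,\r^k)$'', without addressing the fact that Savin's seminorm, transported back from unit scale, carries the divergent factor $\r^{-k\a_S}$. Your explicit interpolation $\min(A,B)\le A^{\theta}B^{1-\theta}$ with $\theta=\a_O/(\a_O+\a_S)$, which forces the harmonic-mean exponent $\a=\a_O\a_S/(\a_O+\a_S)$, is exactly the device that cancels the $\r^{\pm k}$ factors and makes that sentence rigorous, uniformly in $k$ (hence in the center, in $\e$, and in $e$). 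So your argument is the careful execution of the paper's sketch, and the obstacle you singled out is the right one.
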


\begin{proof}
It suffices to show that $(\p_e u-(1+\l))_+$ has an $\a$-H\"older modulus of continuity at the origin if $\p_e u(0) > 1+\l$, independently of $\e$ and $e$. The rest of the argument then follows from a covering argument, noticing that the continuity at the zero level set is just a consequence of the continuity over the positivity set. So let us assume $\p_e u(0) > 1+\l$ from now on.

Let $\d\in(0,1)$ sufficiently small, depending on $\l$, such that Theorem \ref{savin} applies. Let $\a\in(0,1)$ be the smallest exponent among the one in Corollary \ref{cor} ($= \ln(1-\s)/\ln\r$) and Theorem \ref{savin}. By Corollary \ref{cor} we get that
\[
\sup_{r\in (\r^k,1)} r^{-\a} \osc_{B_r} (\p_e u-(1+\l))_+ \leq C
\]
Finally Theorem \ref{savin} completes the modulus of continuity for $r\in (0,\r^k)$.
\end{proof}

\begin{proof}[Proof of Theorem \ref{thm:main}]
Let us assume without loss of generality that $r \in [0,1]$, otherwise we can zoom in the equation to enforce this condition. Let $\overline{B_\r(x_0)}\Subset \W$ with $\r\in(0,1)$, and consider $\bar u \in C^{0,1}(\overline{B_1})$ such that
\[
\bar u(x) = \frac{u(\r x+x_0)-u(x_0)}{\r}.
\]
This re-normalization satisfies \eqref{hyp3} for $M=(\|Du\|_{L^\8(B_\r(x_0))}-1)_+$, and solves $\min(-\D \bar u - \bar r, |D\bar u| - 1) = 0$ in $B_1$ for $\bar r = r\r \in [0,1]$.

Given $\e>0$, let $u_\e \in C^2(B_1)\cap C(\overline{B_1})$ be the solution to \eqref{eq_eq0} with $u_\e = \bar u$ over $\p B_1$. These solutions satisfy the main hypotheses \eqref{hyp1}, \eqref{hyp2}, \eqref{hyp3} and converge locally uniformly to $\bar u$ as $\e \to 0^+$.  Let us show that for any $\l>0$ and $e\in\p B_1$, $(\p_e u_\e-(1+\l))_+$ and $(|Du_\e|-(1+\l))_+$ also converge locally uniformly towards $(\p_e \bar u-(1+\l))_+$ and $(|D\bar  u|-(1+\l))_+$, up to a sub-sequence independent of $e$.

Thanks to the compactness provided by Corollary \ref{coro2}, we get that up to a sub-sequence, $(\p_e u_\e-(1+\l))_+$ and $(|Du_\e|-(1+\l))_+$ converge locally uniformly towards $U_e$ and $U$. For any $x_0 \in \{U>0\}\cap B_1$ we get that there exist some ball $B_\eta(x_0) \ss B_1$, some direction $e\in\p B_1$, and some angle $\theta \in (0,\pi/2)$, such that for any $e' \in \p B_1 \cap \{e\cdot e' \geq \cos\theta\}$
\[
\p_{e'} u_\e > 1+\l \text{ in } B_\eta(x_0).
\]
It means that the partial derivatives of $u_\e$ converge uniformly to the ones corresponding to $\bar u$ over this neighborhood of $x_0$.

If instead $x_0 \in \{U=0\}\cap B_1$ then for any $\d>0$ there exists some ball $B_\eta(x_0) \ss B_1$ such that $|Du_\e| < 1+\l+\d$ in $B_\eta(x_0)$. By the mean value theorem we get that the incremental quotient
\[
\frac{u_\e(x_0+he)- u_\e(x_0)}{h} < 1+\l+\d,
\]
for $h\in (0,\eta)$. Taking $\e\to0^+$, then $h\to 0^+$, and finally $\d\to 0^+$, we obtain
\[
\sup_{e\in \p B_1}\limsup_{h\to 0^+} \frac{\bar u(x_0+he)-\bar u(x_0)}{h}  \leq 1+\l.
\]
This concludes the argument to show that $(\p_e u_\e-(1+\l))_+$ and $(|Du_\e|-(1+\l))_+$ converge locally uniformly towards $(\p_e \bar u-(1+\l))_+$ and $(|D\bar u|-(1+\l))_+$. This finally implies the continuity of $|D\bar u|-1 = (|D\bar u|-1)_+$ around 
zero and concludes the proof of the theorem.
\end{proof}

\bibliographystyle{plain}
\bibliography{mybibliography}

\end{document}